\documentclass[preprint, 11pt]{article}
\usepackage[utf8]{inputenc}
\usepackage[margin=1in]{geometry}  
\usepackage{amsthm,amssymb,amsmath}
\usepackage[square,numbers]{natbib}
\usepackage{authblk,color}
\newtheorem{theorem}{Theorem}
\usepackage{enumerate}
\usepackage{graphics}
\usepackage{graphicx}

\usepackage[colorinlistoftodos,prependcaption,textsize=tiny]{todonotes}
\setuptodonotes{fancyline, color=green!20}

\usepackage[hidelinks]{hyperref}

\usepackage{setspace}

\usepackage[makeroom]{cancel}
\usepackage[normalem]{ulem} %to strike the words

\newtheorem{proposition}[theorem]{Proposition}
\newtheorem{lemma}[theorem]{Lemma}
\newtheorem{prop}[theorem]{Proposition}

\newtheorem{remark}{Remark}
\newtheorem{example}{Example}

\usepackage{bbm}

\title{Joint Pricing and Innovation Control in Regulated Recycling-Rate Diffusion}
\author[1]{\small Bowen Xie
\thanks{bxie@uakron.edu}}
\author[2]{\small Yijin Gao
\thanks{Corresponding author, yjgaomath@163.com}}
\affil[1]{\footnotesize Department of Mathematics, 
College of Engineering and Polymer Science, 
The University of Akron, 
Akron, OH 44325-4002}
\affil[2]{\footnotesize School of Economics and Finance, Shanghai International Studies University,
Shanghai, 201620, PR China}
\date{}

\begin{document}

\maketitle

\begin{abstract}
% \todo{Revise this to include RL}
We introduce a regulated stochastic diffusion model for the recycling rate and formulate a joint control problem over production and process innovation via the dynamics of recycling investment and product pricing. 
The resulting stochastic control problem captures the system manager's trade-off between product-price decisions and investment expenditures under an infinite-horizon discounted cost structure. 
Owing to the recycling-rate specification, we incorporate two regulated state processes, which induce additional policy-driven cost components in the value function consistent with green-economy regulations. 
We resolve the jointly regulated stochastic production and process-innovation admission control problem by introducing the associated Hamilton-Jacobi-Bellman (HJB) equation and providing rigorous proofs that establish the correspondence between the HJB solution and the value function of the underlying control problem. 
The HJB equation is analyzed under mild, practically motivated assumptions on the system parameters. We further present numerical experiments and sensitivity analyses to illustrate the tractability of the HJB characterization and to assess the practical relevance of the imposed parameter conditions.

\vspace{0in}
\noindent\textbf{Keywords:} 
Production innovation, process innovation, regulated stochastic processes, joint control, Hamilton-Jacobi-Bellman equation, recycling rate
% , reinforcement learning
\vspace{0in}

\noindent\textbf{MSC Codes:} 
(Primary) 93E20, 90B30, (Secondary) 60H10, 91B74
% 60K40, 

% \bigskip
\end{abstract}

% \newpage

%%%%% Table of contents %%%%%
\tableofcontents 

%%%%%%%%%%%%%%%%%%%%%%%%%%%%%%%%%%%%%%%%%%%%%%%%%%%%%%%%%%%%%%%%%%%%%%%%%%%%%%
\section{Introduction}
Production innovation and process innovation are two parallel paths for a firm to improve quality and reduce costs under green economy policies. 
It is well known that pricing and innovation policies depend simultaneously on the dynamics of demand and supply. 
A central question in this context is how firms evaluate the trade-offs associated with investing in these two types of innovation.
A substantial body of literature has examined the theoretical and empirical dimensions of product and process innovation. 
For example, \cite{fontana2009product} analyzed the product innovation and survival in a high-tech industry.
\cite{lay2012innovation} discussed the adoption and impact of new manufacturing concepts in the German industrial sector. 
\cite{miravete2006innovation} explored innovation complementarities and production scale. 
Additional studies on product innovation include \cite{fritsch2001product}, \cite{kirner2009innovation}, \cite{enkel2010creative}, and \cite{lin2013market}. 
Similarly, process innovation has also garnered significant attention across various industries.  
\cite{bauer2013exploration} examined process innovation in the chemical industry, while \cite{lager2010managing} addressed the management of process innovation from idea generation to implementation. In the context of Industry 4.0 technologies, green process innovation plays a pivotal role (see \cite{liu2019green}, \cite{de2021process}). \cite{ornaghi2006spillovers} provided evidence of spillovers in process innovation among manufacturing firms. Related contributions include \cite{bergfors2009product}, \cite{frishammar2012antecedents}, and \cite{chirumalla2021building}. 
Both product and process innovation are closely tied to a firm’s profitability. System managers can model these relationships through optimization frameworks that incorporate relevant constraints and trade-offs.

% \textcolor{blue}{explain the extra parts: $L(t), U(t)$}

In this article, we investigate a stochastic model of production and process innovation that explicitly incorporates the dynamics of recycling investment and product pricing, with a focus on the recycling rate within the framework of a circular economy. The state process concerned in this paper is the recycling rate. 
In contrast to the deterministic models presented in \cite{sethi1983deterministic} and \cite{sethi2008optimal}, where the recycling rate is restricted to the interval $[0, 1]$, our stochastic framework introduces a diffusion term that propagates throughout the system. This necessitates additional constraints to ensure the recycling rate remains within the admissible bounds. 
To address this, we introduce a regulated stochastic recycling process, whose sample paths are constrained by two local time processes, $L(t)$ and $U(t)$, which activate only when the recycling rate $r(t)$ reaches the lower boundary (0) or the upper boundary (1), respectively, for $t \in [0, T]$.

Importantly, our model preserves the structural formulation of the recycling rate dynamics found in deterministic models. 
Based on this regulated stochastic process, the system manager aims to maximize profit. We define a cost functional comprising two components: (i) profit generated from meeting market demand, and (ii) penalties incurred when the recycling rate falls to zero.
The integration of recycled components into production can reduce costs and influence demand, particularly when modeled using functions such as the Cobb-Douglas demand function. The profit function is thus formulated in terms of product price, recycling investment, and the recycling rate (see Section~\ref{section2}). Penalties are introduced to reflect government policies that promote environmental sustainability and carbon-neutral production practices.
Consequently, we formulate a stochastic control problem to analyze how the recycling rate affects production profitability, allowing for joint optimization of product pricing and investment in recycling.

Our main strategy for solving the stochastic control problem is to utilize the corresponding Hamilton-Jacobi-Bellman (HJB) equation. 
This approach has been widely adopted in prior research (cf. \cite{ghosh2010optimal}, \cite{weerasinghe2018controlling}, \cite{xie2024long}, \cite{budhiraja2024ergodic}, \cite{xie2025single}). Analyzing the HJB equation enables us to characterize the solution profile, offering a clear and concise representation of the control problem and its optimal strategy.
However, this problem poses significant challenges due to the complexity of constructing the optimal policy under a given cost structure and the intricacies of jointly optimizing the control policies. 

To this end, we establish a rigorous connection between our joint stochastic control problem and the solution to its associated HJB equation. A key challenge lies in demonstrating how various parameters influence the system. 
We observe that different parameter choices can lead to distinct optimization formulations and corresponding HJB equations. To accommodate multiple scenarios simultaneously, we introduce functional extensions within the HJB framework (see Remark \ref{remark: parameter a_1 <= 1} in Section \ref{sec: hjb}).

To address the extended HJB equation, we reformulate it as a boundary value problem with free initial conditions, which admits a similar structure to free-boundary value problems. 
Our theoretical analysis focuses on the autonomous case, which can be further reduced to a non-linear ordinary differential equation (ODE) under appropriate conditions. 
Several propositions are presented to elucidate the structure of the solution in the context of production and process innovation.
As discussed in Section~\ref{section4}, we consider a parameterized differential equation subject to an imposed initial condition. 
By varying this initial condition, we identify the optimal parameter that yields the admission control policy maximizing the objective profit functional. Following this analytical framework, we provide numerical examples in Section~\ref{section5} to illustrate the optimality of the control strategy and the behavior of the optimal state process.

Moreover, it is worth noting that we impose certain constraints on the product price function $p(\cdot)$ in our formulation. 
First, we assume that the product price is bounded above by a constant $p_0$, i.e., $p < p_0$. This reflects the practical consideration that while product managers, acting as rational economic agents, may adjust prices flexibly, they typically do not exceed a certain threshold due to market norms and consumer expectations.
Second, we require that the maximum allowable price satisfies $p_0 \geq c_v$, where $c_v > 0$ denotes the unit production cost of virgin resources. 
This assumption is grounded in the observation that the cost of virgin resources generally exceeds that of recycled resources. If $p_0 < c_v$, it would imply that the system manager is willing to accept a negative profit margin per unit, which may only be justified under abnormal pricing strategies such as predatory dumping.
To maintain the generality and practicality of our model, we adopt these standard assumptions and exclude such atypical pricing behaviors from our analysis.

% \textcolor{blue}{Our main contributions (novel aspects) are two-folds: (1) firstly, we add the $L(t),U(t)$......(2) Another }
%%%%%% Contribution %%%%%%%%
\subsection{Contributions}

To the best of our knowledge, this is the first study to focus on the formulation and corresponding HJB equation in production and innovation models in conjunction with the concept of propagation of Gaussian chaos. Our main contributions can be summarized as follows: 

(i) We propose a regulated diffusion process for the recycling rate, incorporating Gaussian external shocks into its dynamics. 
This stochastic behavior propagates through the system, influencing the balance between product and process innovation. 
To ensure the recycling rate remains within the physically meaningful interval $[0, 1]$, we introduce two regulation processes (see Remark \ref{rm: L and U}), which are often called the local time processes or reflection barriers. 
These constraints are essential due to the interpretation of the recycling rate as a bounded state variable. 

(ii) The formulation of the joint stochastic control problem incorporates a penalty term that activates when the recycling rate reaches zero. 
This reflects the growing societal emphasis on carbon-neutral production, which is widely recognized for its environmental and economic benefits. 
The penalization serves as an incentive mechanism to promote higher recycling rates. In practice, the penalty parameter within the cost/profit functional can be calibrated using empirical data from specific companies or industry sectors, and adjusted to reflect governmental policies and sustainability goals.

(iii) We provide a detailed derivation of the joint stochastic admission control problem and its corresponding HJB equation in Section~\ref{section4}. 
The implementation of this approach in our context is non-trivial due to the potential extensions arising from the selection of system parameters. 
In particular, we observe that the sensitivity of demand to price must be treated in two distinct cases: $a_1 \geq 1$ and $0 <a_1 < 1$, as these regimes lead to different structural properties in the profit function. 
To ensure completeness, we incorporate a functional extension that accommodates both cases in our analysis (see Remark~\ref{remark: parameter a_1 <= 1}). As a result, we do not impose any restrictive assumptions on the price sensitivity parameter, allowing our model to capture a broader range of market behaviors and demand responses. 

(iv) In the numerical simulation based on the formal HJB equation, we present the solution profile and conduct sensitivity analysis for system parameters. 
We observe that the sensitivity of demand to the price parameter $a_1$ plays a crucial role in the joint control problem, which aims to determine the optimal recycling investment and product price. 
In addition, solving the parameterized differential equation derived from the HJB is computationally intensive, particularly due to the difficulty of identifying optimal parameters through exhaustive search. 

% To investigate alternative, non-conventional approaches, we apply the RL algorithms, modeling the dynamic joint control problem as a Markov Decision Process (MDP). 
% While it is natural to seek methods that circumvent the complexities inherent in joint stochastic control and HJB formulations, the performance and parameter-tuning strategies of RL-based methods remain uncertain and warrant further investigation. 

% We explore reinforcement learning (RL) as an alternative approach to solving the joint stochastic optimization problem. 
% We aim to evaluate the performance and applicability of RL within this specific context. To benchmark against analytical solutions obtained from theoretical analysis, we conduct numerical simulations based on the formal HJB equation. However, solving the parameterized differential equation derived from the HJB is computationally intensive, particularly due to the difficulty of identifying optimal parameters through exhaustive search. 
% To investigate alternative, non-conventional approaches, we apply the RL algorithms, modeling the dynamic joint control problem as a Markov Decision Process (MDP). While it is natural to seek methods that circumvent the complexities inherent in joint stochastic control and HJB formulations, the performance and parameter-tuning strategies of RL-based methods remain uncertain and warrant further investigation. 

% \textcolor{blue}{Literature review: solver for optimal problem}
%%%%%%% literature review %%%%%%
\subsection{Related literature}

In management science, production and process innovation models have received considerable attention in the circular economy literature. 
\cite{bonanno1998intensity} examined the competitive dynamics and strategic choice between these two types of innovation. \cite{adner2001demand} explored the relationship between demand heterogeneity and technological evolution. \cite{helmes2013optimal} addressed an optimal advertising and pricing problem within a class of general new-product adoption models. 
In the context of sustainable supply chains, \cite{ghosh2015supply} and \cite{chen2019reverse} focused on the development and sensitivity analysis of green supply chains. \cite{chenavaz2017analytical} proposed an analytical framework linking product quality and advertising strategies. \cite{zhu2017green} investigated green product design in competitive supply chain environments. 
Furthermore, the interplay between product and process innovation has been studied in the context of Stackelberg competition between North-country firm and South-country firm (cf. \cite{chichilnisky1994north}, \cite{bonanno1998intensity}, \cite{wang2019product}). 

Moreover, control problems within the circular economy framework have been actively studied in the literature. 
\cite{liu2015joint} formulated an inventory model for perishable foods, where demand depends on both price and quality, which decays continuously over time.
They established a joint dynamic pricing and investment strategy. \cite{li2017dynamic} proposed a dynamic control model for a multiproduct monopolist, incorporating product and process innovation with knowledge accumulation through learning-by-doing. \cite{li2020dynamic} extended this framework by considering reference quality in the dynamic control of product and process innovation. \cite{schlosser2021circular} introduced a deterministic model for joint dynamic pricing and recycling investment.
For further results on optimization problems in production and process innovation, see \cite{chenavaz2012dynamic}, \cite{dawid2015product}, \cite{pan2016dynamic}, and \cite{madani2017sustainable}. 
Admission control problems in stochastic systems have a long history, beginning with the seminal work of \cite{bather1966continuous}. These problems have been extensively studied across various domains, including mathematical finance, management science, and industrial engineering.
In mathematical finance, stochastic control problems arise in the context of target zones for exchange rates and central bank interventions (cf. \cite{krugman1991target}, \cite{bertola1992target}, \cite{miller1996optimal}, \cite{cadenillas1999optimal}). In high-frequency trading, market-making problems often involve stochastic optimization (cf. \cite{bertsimas1998optimal}, \cite{guo2017optimal}). In queueing systems and production-inventory models under heavy traffic, buffer-length control problems involve trade-offs between the cost of rejected customers due to full buffers and the cost of customer abandonment due to long wait times (cf. \cite{koccauga2010admission}, \cite{weerasinghe2013abandonment}, \cite{weerasinghe2016optimal}, \cite{biswas2017ergodic}, \cite{arapostathis2018infinite}, \cite{yang2020optimality}, \cite{xie2024long}). 

Stochastic models in production and innovation have also been explored. \cite{zhang2011stochastic} proposed a stochastic production planning model for firms facing seasonal demand and market uncertainty. \cite{filar2001two} introduced a two-factor production model, where one factor follows a continuous controlled diffusion process and the other a discrete controlled jump process. \cite{parlar1985stochastic} incorporated a compound Poisson cumulative demand and a chance constraint. \cite{lee2006stochastic} and \cite{tang2012stochastic} analyzed stochastic production frontier models with group-specific temporal variation in technical efficiency and non-linear cost structures. \cite{leung2004robust} developed a robust optimization model for stochastic aggregate production planning. 
Additional studies on stochastic innovation include \cite{allen1982some}, \cite{reynolds1992stochastic}, \cite{gutierrez2005forecasting}, \cite{tan2008estimating}, \cite{liu2009multi}, and \cite{coletti2016stochastic}.

% In recent years, reinforcement learning (RL) has gained significant traction as a data-driven approach for solving stochastic control problems, particularly in complex joint optimization settings. 
% For instance, \cite{walton2021learning} provided insights into the application of supervised, online, and reinforcement learning techniques to queueing systems. \cite{dai2022queueing} extended the theoretical framework of advanced policy gradient methods to Markov Decision Processes (MDPs) with infinite state spaces, unbounded costs, and long-run average objectives—features commonly encountered in complex queueing networks. \cite{quer2022connecting} explored the connection between stochastic optimal control and RL, demonstrating how optimal control problems can be framed within the RL paradigm. 
% Recent research has also focused on developing RL algorithms specifically tailored to stochastic control, often outperforming traditional heuristics across various traffic regimes. 
% For instance, \cite{jia2024online} proposed two online learning algorithms—Batch Upper Confidence Bound and Batch Thompson Sampling (BTS)—for price-based revenue management with reusable resources. Additional contributions by \cite{dai2021refined}, \cite{feng2021scalable}, \cite{chen2023online}, \cite{baron2024supervised}, and \cite{xie2025single} further illustrate the growing role of RL in addressing stochastic control challenges. 

% \textcolor{blue}{organization}
\subsection{Organization}

The remainder of this article is organized as follows. 
In Section~\ref{section2}, we present a basic deterministic model of production and process innovation, along with its associated control problem, as a motivating example. 
Section~\ref{section3} introduces the stochastic control problem based on a regulated stochastic recycling rate process. We begin by outlining the fundamental assumptions and then formulate the corresponding Hamilton-Jacobi-Bellman (HJB) equation.
Section~\ref{section4} is devoted to deriving the connection between the stochastic control problem and its associated HJB equation. 
We also establish the joint optimal strategy for product pricing and recycling investment. In Section~\ref{section5}, we provide numerical examples to illustrate the theoretical results and perform sensitivity analysis. 
% Section~\ref{sec6 RL} investigates the reinforcement learning (RL) approach for addressing the joint stochastic optimization problem and establishes a comparison between the theoretical results and data-driven methodologies. 
Finally, Section~\ref{section7} concludes the paper with a summary of findings and potential directions for future research.

\subsection{Notation}

% \textbf{Notations. }
Let $\mathbb{N}$ represent the set of positive integers. Let $\mathbb{R}$ denote the set of real numbers, and $\mathbb{R}_+^* = \{x\in\mathbb{R}: x > 0\}$ and $\mathbb{R}_+ = \{x\in\mathbb{R}: x\geq 0\}$. 
For $0< T\leq \infty$, let $\mathcal{D}[0, T]$ denote the Skorokhod space of functions with right continuous and left limit (RCLL or c\`adl\`ag) from $[0, T]$ to $\mathbb{R}$, equipped with the usual Skorokhod topology. 
The uniform norm on $[0, T]$ for a stochastic process $X$ in $\mathcal{D}[0, T]$ is defined by $\|X\|_T = \sup_{t\in[0, T]}|X(t)|$. 
% \begin{equation}
%     \|X\|_T = \sup_{t\in[0, T]}|X(t)|.  
% \end{equation}
% Throughout, we use $\Rightarrow$ to denote weak convergence in the Skorokhod space $D[0, T]$. 
For any real number $a$, $a^+ = \max\{a, 0\}$ and $a^- = \max\{-a, 0\}$. For any two real numbers $a$ and $b$, $a\wedge b = \min\{a, b\}$ and $a\vee b = \max\{a, b\}$.

%%%%%%%%%%%%%%%%%%%%%%%%%%%%%%%%%%%%%%%%%%%%%%%%%%%%%%%%%%%%%%%%%%%%%%%%%%%%%%
\section{Motivation}\label{section2}

As a motivating example, we consider a basic deterministic model of production and process innovation studies in the literature, along with its corresponding control problem, in the absence of propagation of chaos. This model captures the joint dynamics of pricing strategy and recycling investment. 
To this end, we introduce two variables: the investment expense in recycling $u(t)\geq0$ and the recycling rate $r(t)\in[0, 1]$ for $t\in[0, T]$, where $T>0$ is a constant. 
Many firms tend to invest in green processes by maintaining a recycling rate that contributes to environmentally friendly operations. 
However, fluctuations in the recycling rate may lead to degradation in investment effectiveness toward greenness. 
This relationship can be modeled by a differential equation: 
\[
    d r(t)=R(u(t),r(t))\, dt, 
\]
where $r(0) = r_0\in[0, 1]$ is a constant, and the recycling dynamics are characterized by a twice-continuously differentiable function $R:\mathbb{R}_+ \times [0, 1] \mapsto \mathbb{R}$. 

Additionally, firm managers are assumed to behave as \textit{Homo economicus}, aiming to maximize profit or minimize cost under appropriate constraints. 
It is therefore natural to formulate a control problem that captures the trade-off between environmental sustainability (greenness) and economic profitability. 
We consider a finite-horizon discounted cost functional and establish the following objective function (cf. \cite{schlosser2021circular}): 
\begin{equation}\label{iniwithoutbm}
    \begin{split}
       & V_1=\sup_{u(.),p(.)\geq 0} \int_0^T e^{-\alpha t} \pi(p(t),u(t),r(t))\, dt,\\
        & \text{subject to } \, d r(t)=R(u(t),r(t))\, dt, \quad r(0)=r_0, 
    \end{split}
\end{equation}
where $T > 0$ and $r_0\in[0, 1]$ are fixed constants, $p(\cdot)$ denotes the price of the product, $u(\cdot)$ represents the recycling investment made prior to demand realization, $r(\cdot)$ characterizes the recycling rate, and $\alpha>0$ denotes the discount factor. 
Here, the joint control variables are the investment expense in recycling $u(\cdot)$ and the product price $p(\cdot)$. 

More precisely, since the profit function $\pi(p(\cdot),u(\cdot),r(\cdot))$ has various definitions in the literature, we adopt the formulation proposed by \cite{schlosser2021circular} for clarity and consistency. 
In this framework, the profit function is defined as the revenue from sales minus the production and recycling investment costs: 
$$
\pi(p(t),u(t),r(t))=[p(t)-(1-r(t))c_v]D(p(t),r(t))-u(t),
$$
where $D:\mathbb{R}_+ \times [0, 1] \mapsto \mathbb{R}$  represents the customer demand, which depends on the product price $p$ and the recycling rate $r$, and $c_v > 0$ denotes the unit production cost of virgin resources. 
To align with the previous research, We account for the following assumptions of the demand function: 
\[
\frac{\partial D}{\partial p}<0, \frac{\partial D}{\partial r}\geq 0, \frac{\partial^2 D}{\partial p \partial r}\leq 0, 
\]
which reflect standard economic intuition such that demand decreases with price, increase with recycling rate, and the marginal effect of recycling on demand diminishes as price increases. 

For instance, \cite{li2020dynamic} and \cite{schlosser2021circular} adopt a linear demand form: 
\[
D(p(\cdot), q(\cdot), r(\cdot)) = a-a_1 p(\cdot)+a_2q(\cdot)+a_3[q(\cdot)-r(\cdot)],
\]
where $a$ is the baseline market potential, $a_1$ and $a_2$ capture the effects of the price $p(\cdot)$ and product quality $q(\cdot)$, respectively, and $a_3$ reflects the impact of the difference between product quality and reference quality. 
In our model, for simplicity and analytical tractability, we adopt the Cobb-Douglas demand function:
$
D(p,r)=a_0 p^{-a_1} r^{a_2}.
$

\begin{remark}

Further examples of profit functions used in the objective formulation \eqref{iniwithoutbm} can be found in the literature. 
For instance, \cite{chenavaz2012dynamic} proposed a simplified profit function:
$$
\pi(t)=(p-c)f-u_q(t)-u_c(t),
$$
where $p$ is the product price, $c$ is the unit production cost, $f$ denotes the demand, and $u_q(t)$ and $u_c(t)$ represent expenditures on product innovation and process innovation, respectively. 
In a more detailed setting, \cite{li2020dynamic} introduced a three-dimensional profit function that incorporates simultaneous product and process innovation: 
$$
\pi(t)=[p(t)-c(t)][a-a_1 p(t)+a_2 q(t)+a_3(q(t)-r(t))]-\left[\frac{\alpha}{2}k^2(t)+\frac{\theta}{2}h^2(t)+v q^2(t)\right].
$$
For brevity, we omit the detailed definitions of these variables and parameters; interested readers may refer to \cite{li2020dynamic} for a comprehensive exposition. 
\end{remark}

In practice, the evolution of the recycling rate $r(\cdot)$ can be influenced by various external factors, such as environmental protection policies, geographical conditions that facilitate the collection and storage of reusable materials, and the cost of reprocessing. 
To capture the inherent uncertainty in these influences, it is natural to incorporate random white noise into the dynamics of the recycling rate, representing small variations as potential external shocks.
This stochastic perturbation enables us to formulate a stochastic control problem that captures the joint dynamics of production and process innovation, while also allowing us to investigate the propagation of chaos within a circular economy framework. Accordingly, we propose a stochastic production and process innovation model in the next section.

\section{Stochastic model}\label{section3}

\subsection{Basic assumptions}\label{sec: assumptions}

In this section, we present the fundamental assumptions underlying the stochastic control problem. These assumptions serve as the basis for the model formulation introduced in the next section, where we establish the core dynamics and derive the formal HJB equation. The HJB equation can then be simplified and specified using the assumptions outlined here.

Our model is built upon a set of mild and standard assumptions. 
The recycling dynamics function $R:\mathbb{R}_+\times [0, 1]\mapsto \mathbb{R}$ is assumed to be a twice continuously differentiable function and satisfies the following conditions:
$$
\frac{\partial R}{\partial u}>0,\ \frac{\partial^2 R}{\partial u^2}<0, \frac{\partial R}{\partial r}\leq 1.
$$
A representative example of such a function is given by 
\begin{equation}
    R(u,r)=\gamma u^{\frac{1}{\gamma}} (1-r)-\delta r, 
\end{equation}
where $\gamma>1$ expresses the efficiency of recylcing investment, and $\delta>0$ is a proportional decay rate. 
This form captures diminishing returns in investment and the natural degradation of recycling effectiveness. 
Similar formulations can be found in \cite{sethi1983deterministic}, \cite{sethi2008optimal}, and \cite{schlosser2021circular}.

The choice of profit function $\pi:[0, \infty)\times[0, \infty)\times[0, 1]\mapsto\mathbb{R}$ is determined by three variable functions: $p(\cdot), u(\cdot)$, and $r(\cdot)$. 
We adopt the well-posed formulation proposed by \cite{chenavaz2012dynamic}, where the profit is defined as 
\begin{equation}
    \pi(t)=(p-c)f-u_q(t)-u_c(t),   
    \label{eq: profit function}
\end{equation}
where $u_q$ and $u_c$ represent expenditures on product and process innovation, respectively, and $c$ denotes the cumulative production cost. 
A related formulation, also inspired by \citet{chenavaz2012dynamic}, is given by
\begin{equation}
    \pi(p(t),u(t),r(t))=[p(t)-(1-r(t))c_v]D(p(t),r(t))-u(t),
    \label{eq: func of pi}
\end{equation}
where $p$ represents the retail price, $(1-r)c_v$ represents the effective unit cost of production, and $c_v> 0$ denotes unit cost of virgin resources. 
The demand function $D(p(\cdot), r(\cdot))$ follows the assumptions outlined in Section~\ref{section2}, where we employ the Cobb-Douglas demand function:
\begin{equation}
    D(p,r)=a_0 p^{-a_1} r^{a_2},
\end{equation}
where $a_0>0$ denotes the market potential, $a_1>0$ denotes the sensitivity of demand to price, $a_2>0$ denotes the sensitivity of demand to product greenness (cf. \cite{dai2017green}, \cite{saha2017optimal}, \cite{schlosser2021circular}).

\subsection{Joint stochastic control framework}
\label{sec: stochastic control problem}

In practice, the recycling rate $r(\cdot)$ in economic models may be influenced by external chaotic factors, which can be effectively modeled using Brownian motion. 
We define the state process of the recycling rate as a weak solution to the following stochastic differential equation:
\begin{equation}
    d r(t)=R(u(t),r(t))\, dt+\sigma\, dW(t), 
    \label{eq: sde}
\end{equation}
where $r(0) = r_0\in[0, 1]$, and $\sigma\in\mathbb{R}$ denotes the volatility parameter, $R:\mathbb{R}_+\times [0, 1]\mapsto \mathbb{R}$ is a twice continuously differentiable function, and $W(\cdot)$ denotes a one dimensional standard Brownian motion on a probability space $(\Omega, \mathcal{F}, P)$. 
The existence of a weak solution is well established in standard references on stochastic differential equations (cf. \cite{karatzas1991brownian}, \cite{protter2005stochastic}, \cite{oksendal2013stochastic}). 
Notice that since the recycling rate is constrained to the interval $[0, 1]$, additional construction of the state process is required to ensure this boundedness, which will be addressed shortly (also, see Remark \ref{rm: L and U}). 

Moreover, since firms are typically more concerned with the long-term behavior of their profits, we introduce the infinite-horizon discounted cost functional $J(r_0, u, p)$ as follows: 
\begin{equation}
    J(r_0, u, p) = E\left[ \int_0^{\infty} e^{-\alpha t} \pi(p(t),u(t),r(t))\, dt\right], 
    \label{eq: cost functional}
\end{equation}
where $\alpha>0$ is the discount factor, and $\pi:[0, \infty)\times[0, \infty)\times[0, 1]\mapsto\mathbb{R}$ denotes the profit function as defined in Section \ref{section2}. 
The objective of the joint stochastic control problem is to determine an optimal policy pair $(u^*, p^*)$ and the corresponding recycling rate process $r^*$ that jointly maximize the cost functional defined in \eqref{eq: cost functional}.

For a given $r_0\in[0, 1]$, we define the ordered triple $(r_{0}, u, p)$ as an admissible control system if the following conditions are satisfied 
\begin{enumerate}[(i)]
    \item $(r_{0}, u, p)$ is a weak solution to \eqref{eq: sde}, and
    \item the cost functional $J(r_0, u, p)$ is finite. 
\end{enumerate}
To define the value function, we introduce the collection of all the admissible controls, denoted by $\mathcal{A}(r_0)$. 
Notice that this set is nonempty since the cost functional is finite for zero controls. 
The value function associated with the joint stochastic control problem is then given by
\begin{equation}
    V_2(r_0) = \sup_{u, p\geq 0} J(r_0, u, p), 
    \label{equ: value function}
\end{equation}
for $r_0\in[0, 1]$. 

\begin{remark}\label{rm: L and U}
    Since the recycling rates are generally constrained to the interval $[0, 1]$, additional constraints may be necessary to guarantee their physical meaning. It is worth noting that the recycling model used in the deterministic optimization problem \eqref{iniwithoutbm} ensures that $r(t)\in[0, 1]$ for $t\in[0, \infty)$ provided the initial state satisfies $r_0\in[0, 1]$ (cf. \cite{sethi1983deterministic}, \cite{sethi2008optimal}). 
    However, this boundedness is not preserved when chaotic dynamics are introduced into the system (see Figure \ref{fig: sample graph of r without restrictions}). 
    \begin{figure}[h!]
        \centering
        \includegraphics[scale=0.5]{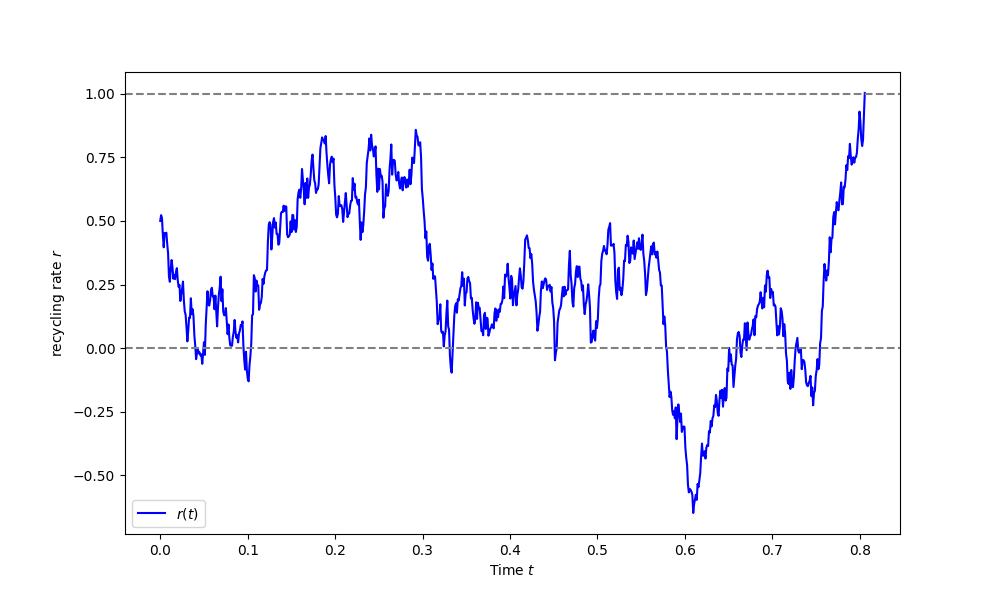}
        \caption{Sample path of stochastic differential equation \eqref{eq: sde}, where the $x$-axis denotes time $t$ and $y$-axis represents the recycling rate $r$.}
        \label{fig: sample graph of r without restrictions}
    \end{figure}
    Therefore, we introduce the following regulated stochastic differential equation:     
    \begin{equation}
        d r(t)=R(u(t),r(t))\, dt+\sigma \, dW(t)  + \,dL(t) - \,dU(t), 
        \label{eq: sde with reflections}
    \end{equation}
    where $L(\cdot)$ and $U(\cdot)$ are the regulation processes.
    More precisely, they satisfy that $L(0) = U(0) = 0$ and for any $T>0$, 
    \begin{equation}
    % \begin{aligned}
        \int_0^T \mathbbm{1}_{[r(s)>0]}(s)\,dL(s) = 0, \ \text{ and }
        \int_0^T \mathbbm{1}_{[r(s)<1]}(s)\,dU(s) =0. 
    % \end{aligned}
    \end{equation}
    Note that $L(t)$ and $U(t)$ are non-decreasing and increase only when $r(t)$ reaches the boundaries $0$ and $1$ for some time $t$, respectively. 
    These are referred to as local time processes in the literature (see, for instance, \cite{oksendal2013stochastic}). 
    The regulated stochastic differential equation guarantees that $r(\cdot)\in[0, 1]$ (see Figure \ref{fig: sample graph of regulated r} for a sample path). 
    Throughout this work, we consider \eqref{eq: sde with reflections} as the state process for the proposed joint stochastic control problem. 
    \begin{figure}[h!]
    \centering
        \includegraphics[scale=0.5]{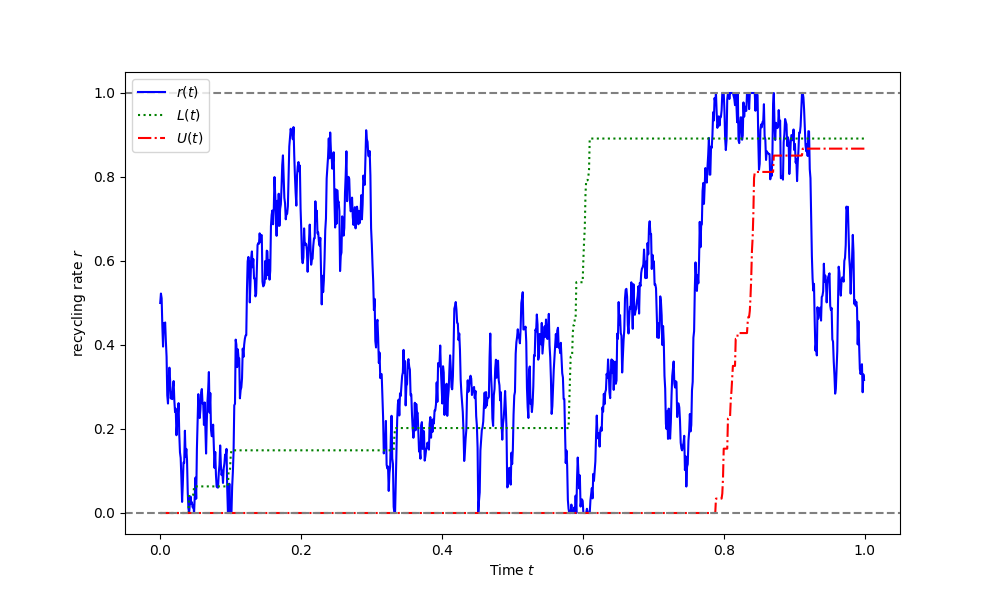}
        \caption{Sample path of regulated stochastic differential equation \eqref{eq: sde with reflections}, where the blue solid line exhibits the recycling rate $r$ with respect to time $t$, the red dashed line $U$ denotes the local time process of $r$ reaching the upper boundary 1, and the green dotted line $L$ represents the local time process of $r$ reaching the lower boundary zero. }
        \label{fig: sample graph of regulated r}
    \end{figure}
\end{remark}

Thus, the definitions of the state process \eqref{eq: sde}, cost/profit functional \eqref{eq: cost functional}, and value function \eqref{equ: value function} collectively yield a stochastic control formulation for the evolution equation under chaotic fluctuations:  
\begin{equation}\label{equwithbm}
    \begin{split}
        &V_2=\sup_{u(\cdot),p(\cdot)\geq 0}E\left[ \int_0^{\infty} e^{-\alpha t} \left(\pi(p(t),u(t),r(t))dt - C_L dL(t)\right)\right],\\
       &\text{subject to } d r(t)=R(u(t),r(t)) dt+\sigma dW(t)  + dL(t) - dU(t), 
    \end{split}
\end{equation}
% %%
% \textcolor{red}{*** Do we want to include a penalty for reaching the boundaries such that the cost functional would subtract $-C_L dL(t) - C_U dU(t)$? What are the physical meanings of this move? Also, let $C_U = 0$ since recycle rate 1 is encouraged ***}
% %%
where $r(0) = r_0\in[0, 1]$, and $\sigma\in\mathbb{R}$ denotes the diffusion coefficient, $R:\mathbb{R}_+\times [0, 1]\mapsto \mathbb{R}$ is a twice continuously differentiable function, $L(\cdot)$ and $U(\cdot)$ are the local time processes as defined above, and $W(\cdot)$ is a one dimensional standard Brownian motion on a probability space $(\Omega, \mathcal{F}, P)$. 
Moreover, the profit structure consists of two components: the revenue generated from the profit function $\pi$ as defined in Section \ref{section2}, which accumulates wealth at a rate of $\pi(p(t), u(t), r(t))\,dt$, and the penalty incurred when the recycle rate reaches zero, represented by a cost proportional to $dL(t)$ with a scalar weight $C_L > 0$. 
This penalization reflects the system manager's preference for maintaining a carbon-free environment.

\subsection{Hamilton-Jacobi-Bellman equation}
\label{sec: hjb}

Next, we introduce the formal Hamilton-Jacobi-Bellman (HJB) equation as the fundamental approach to solving the stochastic control problem \eqref{equwithbm}. 
One may follow the methodology proposed by \cite{pham2005some} to obtain the following HJB equation: 
% \begin{equation}\label{equhjb}
%     \begin{split}
%         \alpha V_2
%         % -\frac{\partial V_2}{\partial t}(t,r)
%         -\sup_{u,p \geq 0}\left\{\mathcal{L}_2  V_2(t,r)+\pi(p(t),u(t),r(t))\right\}=0, 
%     \end{split}
% \end{equation}
\begin{equation}\label{equhjb}
    \begin{split}
        \alpha Q
        % -\frac{\partial V_2}{\partial t}(t,r)
        -\sup_{u,p \geq 0}\left\{\mathcal{L}_2  Q(t,r)+\pi(p(t),u(t),r(t))\right\}=0, 
    \end{split}
\end{equation}
where $\mathcal{L}_2$ is the second-order infinitesimal generator associated with the regulated diffusion process \eqref{equwithbm}, given by
% $$
% \mathcal{L}_2 V_2=\frac{1}{2}\sigma^2 (V_2)_{rr} + R(u(t),r(t)) (V_2)_r.
% $$
$$
\mathcal{L}_2 Q=\frac{1}{2}\sigma^2 \frac{\partial^2 Q}{\partial r^2} + R(u(t),r(t)) \frac{\partial Q}{\partial r}.
$$

It is worth mentioning that more detailed derivations of the connection between the HJB and the joint stochastic control problem, using tools from stochastic analysis, are provided in later discussions (see Section~\ref{section4}). 
Our analysis bridges the gap between the solution to the stochastic control problem \eqref{equwithbm} and the solution to the HJB equation \eqref{equhjb}. 
Although this approach is considered standard in the literature, its implementation in our context is non-trivial due to potential extensions arising from the choice of system parameters. 

For brevity, we outline the strategy here and defer the technical details to Section~\ref{section4}. 
The strategy proceeds as follows: 
First, we establish a verification lemma that provides an upper bound for the value function defined in \eqref{equ: value function}. 
Second, we show that this upper bound coincides with the solution to the HJB equation \eqref{equhjb}; and finally, we demonstrate that this bound is attainable by selecting a specific control policy pair $(u^*, p^*)$, which is shown to be optimal. 
This strategy has been employed in previous works, including \cite{ghosh2010optimal}, \cite{weerasinghe2018controlling}, \cite{xie2024long}, \cite{budhiraja2024ergodic}, \cite{xie2025single}. It ensures consistency, allowing the stochastic control problem \eqref{equwithbm} to be reformulated as an HJB equation-solving problem.
Our objective is to investigate the HJB equation through both theoretical analysis and numerical implementation.

Under the assumptions in Section~\ref{sec: assumptions}, we derive the formal HJB equation as follows: 
% \begin{equation}
% \begin{aligned}
%     -\alpha V_2
%     % + \partial_t V_2
%     +\sup_{u,p\geq 0} \left\{\frac{1}{2}\sigma^2 V_2''  + V_2' [\gamma u^{1/\gamma} (1-r)-\delta r]+[p-(1-r)c_v]a_0 p^{-a_1}r^{a_2}-u \right\} &=0,\\
%     % \textcolor{red}{V_2'} &\geq 0, 
% \end{aligned}
% \label{equ: formal HJB}
% \end{equation}
\begin{equation}
\begin{aligned}
    -\alpha Q
    % + \partial_t V_2
    +\sup_{u,p\geq 0} \left\{\frac{1}{2}\sigma^2 Q''  + Q' [\gamma u^{1/\gamma} (1-r)-\delta r]+[p-(1-r)c_v]a_0 p^{-a_1}r^{a_2}-u \right\} &=0,\\
    % \textcolor{red}{V_2'} &\geq 0, 
\end{aligned}
\label{equ: formal HJB}
\end{equation}
with boundary conditions $Q'(0) = C_L$ and $Q'(1) = 0$. 
To solve this HJB equation, we observe that the supremum can be characterized by first-order optimality conditions, which further suggests 
% \textcolor{red}{Dealing with the $\max$ part, we find the first order condition of $u,p$:}
% \begin{align}
%     V_2' \gamma \frac{1}{\gamma} u^{1/\gamma-1} (1-r)-1=0,\\
%     a_0 r^{a_2} (1-a_1)p^{-a_1}-(1-r)c_v a_0 r^{a_2} (-a_1)p^{-a_1-1}=0.
% \end{align}
\begin{align}
    Q' \gamma \frac{1}{\gamma} u^{1/\gamma-1} (1-r)-1=0,\\
    a_0 r^{a_2} (1-a_1)p^{-a_1}-(1-r)c_v a_0 r^{a_2} (-a_1)p^{-a_1-1}=0.
\end{align}
We solve for $u$ and $p$ to obtain the explicit expressions:
% \begin{equation} 
%     u(t)=\left(\frac{1}{(1-r(t)) V_2'} \right)^{\frac{\gamma}{1-\gamma}}, \quad p(t)=\frac{a_1 c_v (1-r(t))}{a_1 - 1}.
%     \label{eq: equations for u and p}
% \end{equation}
\begin{equation} 
    u(t)=\left(\frac{1}{(1-r(t)) Q'} \right)^{\frac{\gamma}{1-\gamma}}, \quad p(t)=\frac{a_1 c_v (1-r(t))}{a_1 - 1}.
    \label{eq: equations for u and p}
\end{equation}
If we substitute these into the initial HJB equation and with some simple algebraic manipulations, we obtain the following non-linear second-order ordinary differential equation (ODE):
% \begin{equation}\label{eq: ode}
%     \begin{split}
%        \alpha V_2 
%        % - \partial_t V_2 
%        &= \frac{1}{2}\sigma^2 V_2'' + \gamma(1 - r)^{\frac{\gamma}{\gamma - 1}} (V_2')^{\frac{\gamma}{\gamma - 1}} - \delta r V_2' - (1 - r)^{\frac{\gamma}{\gamma - 1}} (V_2')^{\frac{\gamma}{\gamma - 1}}\\
%        &\quad+ a_0 r(t)^{a_2} ((1 - r)c_v)^{1 - a_1} \left[\left(\frac{a_1}{a_1 - 1}\right)^{1 - a_1} - \left(\frac{a_1}{a_1 - 1}\right)^{-a_1}\right].\\
%       %  &\left(\frac{1}{1-r} \right)^{\textcolor{red}{\frac{2-\gamma}{1-\gamma}}} (V_2)_r^{\frac{\gamma}{\gamma-1}} \gamma-\delta r\\
%       % &  +c_v^{1-a_1} a_0 (1-r)^{1-a_1} r^{a_2} \left[\left(\frac{a_1}{a_1-1} \right)^{1-a_1}-\left(\frac{a_1}{a_1-1} \right)^{-a_1}\right]-\left(\frac{1}{1-r} \right)^{\frac{\gamma}{1-\gamma}} (V_2)_r^{\frac{\gamma-1}{\gamma}} =0.
%     \end{split}
% \end{equation}
% \iffalse
% For simplicity, we assume $\gamma=1/2$ as the setting. Then $\frac{\gamma}{\gamma-1}=(\gamma-1)/\gamma=-1, \frac{2-\gamma}{1-\gamma}=3$
% \fi
% With some simple algebraic manipulations, we further rewrite the above equation as a second-order ODE:
% \begin{equation}\label{eq: ode}
%     \begin{split}
%         % \partial_t V_2+
%         \frac{1}{2}\sigma^2 V_2'' +
%         (\gamma - 1)(1 - r)^{\frac{\gamma}{\gamma - 1}} (V_2')^{\frac{\gamma}{\gamma - 1}} - \delta r V_2' 
%         +c(1-r)^{1-a_1} r^{a_2}-\alpha V_2=0, 
%     \end{split}
% \end{equation}
\begin{equation}\label{eq: ode}
    \begin{split}
        % \partial_t V_2+
        \frac{1}{2}\sigma^2 Q'' +
        (\gamma - 1)(1 - r)^{\frac{\gamma}{\gamma - 1}} (Q')^{\frac{\gamma}{\gamma - 1}} - \delta r Q' 
        +c(1-r)^{1-a_1} r^{a_2}-\alpha Q=0, 
    \end{split}
\end{equation}
where $c$ is a strictly positive constant defined as
\begin{equation}
    c = a_0c_v^{1 - a_1} \left[\left(\frac{a_1}{a_1 - 1}\right)^{1 - a_1} - \left(\frac{a_1}{a_1 - 1}\right)^{-a_1}\right]. 
    \label{eq: constant c}
\end{equation}
% To ensure $c\in \mathbb{R}_+$, we \textcolor{red}{assume $a_1 > 1$} for the rest of the discussion (see Remark \ref{remark: parameter a_1 <= 1} below). 
It is straightforward to verify that the solution to the non-linear ODE \eqref{eq: ode} also solves the HJB equation \eqref{equ: formal HJB}. 
Therefore, it suffices to consider the solution to the non-linear ODE, and we will further demonstrate that this solution also solves the original stochastic control problem \eqref{equwithbm}. 

\begin{remark}\label{remark: parameter a_1 <= 1}
    It is important to note that the condition $a_1 > 1$ is necessary to ensure that $c\in\mathbb{R}_+^*$. 
    When $0 < a_1 \leq 1$, we may consider the term involving $p$ in \eqref{equ: formal HJB}, namely $[p-(1-r)c_v]a_0 p^{-a_1}r^{a_2}$, becomes monotonically increasing in $p$. 
    In practice, the retail price $p$ is often subject to constraints, such as $p\leq p_0$, where $p_0 > 0$ is a reasonably large constant. 
    As a rational decision-maker (Homo economicus), the system manager seeks to maximize profit, even in the absence of recycling, by setting the highest permissible retail price. 
    Consequently, the price cap $p_0$ typically exceeds the unit production cost, i.e., $p_0 \geq c_v$, to ensure non-negative profit per unit without recycling. 
    Under these conditions, the supremum in \eqref{equ: formal HJB} is attained at the boundary $p = p_0$, and the first-order condition with respect to $p$ is no longer applicable due to the linearity of the objective in $p$. 
    Hence, \eqref{equ: formal HJB} and \eqref{eq: equations for u and p} imply the following modified ODE: 
    % \begin{equation*}
    %     \frac{1}{2}\sigma^2 V_2'' +
    %     (\gamma - 1)(1 - r)^{\frac{\gamma}{\gamma - 1}} (V_2')^{\frac{\gamma}{\gamma - 1}} - \delta r V_2' 
    %     +[p_0-(1-r)c_v]a_0 p_0^{-a_1}r^{a_2}-\alpha V_2=0.  
    %     % \label{eq: ode 0 < a_1 <= 1}
    % \end{equation*} 
    \begin{equation*}
        \frac{1}{2}\sigma^2 Q'' +
        (\gamma - 1)(1 - r)^{\frac{\gamma}{\gamma - 1}} (Q')^{\frac{\gamma}{\gamma - 1}} - \delta r Q' 
        +[p_0-(1-r)c_v]a_0 p_0^{-a_1}r^{a_2}-\alpha Q=0.  
        % \label{eq: ode 0 < a_1 <= 1}
    \end{equation*} 
    % However, in our discussion, we tend to consider a more general case so that we do not impose any constraint on the retail price $p \geq 0$. To this end, it suffices to assume the sensitivity of demand to price parameter $a_1 \geq 1$ for the rest of the discussion. 
\end{remark}

\begin{remark}
    In the formal HJB \eqref{equ: formal HJB}, we seek a solution that is non-decreasing, i.e., $Q'\geq 0$. 
    In the case of $Q'(x) < 0$ for some $x\in [0, 1]$, one observes that the expressions in \eqref{eq: equations for u and p} may not be well defined for certain values of $\gamma > 0$, particularly due to the presence of critical points at the boundaries in the lienar term. 
    It is straightforward to observe that the terms involving $u$, namely $Q'(x)\gamma u^{1/\gamma}(1 - x) - u$, have a negative derivative with respect to $u$, implying that the supremum is attained at the boundary, specifically at $u = 0$. Consequently, the HJB \eqref{equ: formal HJB} simplifies to 
    % \[
    % -\alpha V_2 + \sup_{p\geq 0} \left\{ \frac{\sigma^2}{2} V_2'' - \delta x V_2' + [p - (1 - x)c_v] a_0 p^{-a_1} x^{a_2} \right\} = 0. 
    % \]
    \[
    -\alpha Q + \sup_{p\geq 0} \left\{ \frac{\sigma^2}{2} Q'' - \delta x Q' + [p - (1 - x)c_v] a_0 p^{-a_1} x^{a_2} \right\} = 0. 
    \]
    Using the first-order condition for $p$ derived in \eqref{eq: equations for u and p}, we obtain the following second-order ODE for such values of $x$:
    % \begin{equation*}
    %     \frac{\sigma^2}{2} V_2'' - \delta x V_2' 
    %     +c(1-x)^{1-a_1} x^{a_2}-\alpha V_2=0, 
    %     % \label{eq: ode with V_2' < 0}
    % \end{equation*}
    \begin{equation*}
        \frac{\sigma^2}{2} Q'' - \delta x Q' 
        +c(1-x)^{1-a_1} x^{a_2}-\alpha Q=0, 
        % \label{eq: ode with V_2' < 0}
    \end{equation*}
    where $c>0$ is a constant defined in \eqref{eq: constant c}. 
\end{remark}

To address the discontinuity and accommodate a more general setting, we introduce an auxiliary function $F: \mathbb{R} \to \mathbb{R}_+$ defined by
\begin{equation}
    F(x) = 
    \begin{cases}
        x^{\frac{\gamma}{\gamma - 1}}, & \text{ if } x\geq 0, \\
        0, & \text{ if } x < 0, 
    \end{cases}
    \label{eq: function F}
\end{equation}
with its derivative given by
\begin{equation}
    F'(x) = 
    \begin{cases}
        \frac{\gamma}{\gamma - 1} x^{\frac{1}{\gamma - 1}}, & \text{ if } x\geq 0, \\
        0, & \text{ if }x < 0. 
    \end{cases}
    \label{eq: function F prime}
\end{equation}
% To better understand the extension, we provide some concrete examples of $\gamma$ and compute its Legendre transform. 
We further introduce an extension function $G: [0, 1] \times \mathbb{R}_+ \to \mathbb{R}_+$ to unify the treatment of the profit term across different regimes of $a_1$: 
\begin{equation}
    G(r, a_1) = 
    \begin{cases}
        c(1-r)^{1-a_1} r^{a_2}, & \text{ if } a_1 > 1, \\
        [p_0-c_v(1-r)]a_0 p_0^{-a_1}r^{a_2}, & \text{ if } 0 < a_1 \leq 1, 
    \end{cases}
    \label{eq: function G}
\end{equation}
where $c > 0$ is a constant defined in \eqref{eq: constant c}. 
The derivative of $G$ with respect to $r$ is given by
\begin{equation}
    G'(r, a_1) = 
    \begin{cases}
        c(1 - r)^{-a_1} r^{a_2} \left[(a_1 - 1) + (1-r)a_2 r^{-1}\right], & \text{ if } a_1 > 1, \\
        a_0p_0^{-a_1}r^{a_2} \left[c_v + a_2 r^{-1} (p_0 - c_v(1-r))\right], &\text{ if } 0 < a_1 \leq 1. 
    \end{cases}
    \label{eq: function G prime}
\end{equation}

With these extensions, the non-linear ODE \eqref{eq: ode} can be reformulated as
% \begin{equation}
%     \begin{split}
%         \frac{1}{2}\sigma^2 V_2'' +
%         (\gamma - 1)(1 - r)^{\frac{\gamma}{\gamma - 1}} F(V_2') - \delta r V_2' 
%         +G(r, a_1)-\alpha V_2=0.  
%     \end{split}
%     \label{eq: ode with extensions}
% \end{equation}
\begin{equation}
    \begin{split}
        \frac{1}{2}\sigma^2 Q'' +
        (\gamma - 1)(1 - r)^{\frac{\gamma}{\gamma - 1}} F(Q') - \delta r Q' 
        +G(r, a_1)-\alpha Q=0,  
    \end{split}
    \label{eq: ode with extensions}
\end{equation}
which accommodates both regimes of $a_1 > 1$ and $0<a_1 < 1$, and handles the sign of the first-order derivative $Q'$ gracefully. 
Throughout, it suffices to consider the extension \eqref{eq: ode with extensions}, which encapsulates four distinct scenarios based on the values of $a_1$ and the behavior of $Q'$.

%%%%%%%%%%%%%%%%%%%%%%%%%%%%%
\section{Optimility}\label{section4}

In this section, we conduct the stochastic analysis of the control problem \eqref{equwithbm} to establish the connection between the diffusion control formulation and its associated HJB equation. In particular, we demonstrate that the optimal solution to the control problem \eqref{equwithbm} can be obtained by solving the corresponding HJB equation \eqref{equ: formal HJB}.
To this end, we show that the solution to the HJB equation \eqref{equ: formal HJB} coincides with the value function of the stochastic control problem \eqref{equwithbm}. Our approach proceeds in three steps: first, we derive an upper bound for the value function $V_2(r_0)$, as defined in \eqref{equwithbm}, via a verification lemma; second, we prove that this upper bound is attainable by a specific pair of control functions $(u^*, p^*)$; and finally, we verify that these controls are indeed optimal.

First, we exhibit a verification lemma, which indicates that the value function has an upper bound. 

\begin{lemma}[Verification Lemma]
    \label{veriflemma}
    Let $Q:[0, 1]\mapsto \mathbb{R}$ be a bounded twice continuously differentiable solution to \eqref{equ: formal HJB}, which satisfies that $Q'(x) \geq 0$ is bounded for $x\in[0, 1]$. 
    Then $Q$ is an upper bound of the value function $V_2$ defined in \eqref{equwithbm} such that 
    \begin{equation}
        V_2(x) \leq Q(x) \text{ for all } x\in[0, 1]. 
    \end{equation}
\end{lemma}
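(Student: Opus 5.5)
We fix an admissible control system $(r_0,u,p)\in\mathcal{A}(r_0)$ with state process $r(\cdot)$ solving \eqref{eq: sde with reflections}, and apply It\^o's formula to the discounted process $e^{-\alpha t}Q(r(t))$. Since $Q\in C^2[0,1]$ and $r(\cdot)$ is a continuous semimartingale with finite-variation parts $L$ and $U$, we obtain for each $t>0$
\begin{align*}
e^{-\alpha t}Q(r(t)) &= Q(r_0) + \int_0^t e^{-\alpha s}\bigl[-\alpha Q(r(s)) + \mathcal{L}_2 Q(r(s))\bigr]\,ds + \int_0^t e^{-\alpha s}\sigma Q'(r(s))\,dW(s)\\
&\quad + \int_0^t e^{-\alpha s}Q'(r(s))\,dL(s) - \int_0^t e^{-\alpha s}Q'(r(s))\,dU(s),
\end{align*}
where $\mathcal{L}_2 Q(r(s))$ is evaluated with the running controls $u(s)$ and $p(s)$.

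We handle the boundary terms using the support properties of $L$ and $U$ from Remark~\ref{rm: L and U}. The measure $dL$ charges only $\{r=0\}$, so $\int_0^t e^{-\alpha s}Q'(r(s))\,dL(s)=Q'(0)\int_0^t e^{-\alpha s}dL(s)=C_L\int_0^t e^{-\alpha s}dL(s)$ by the boundary condition $Q'(0)=C_L$. Similarly $dU$ charges only $\{r=1\}$, so the $dU$ term equals $Q'(1)\int_0^t e^{-\alpha s}dU(s)=0$. In any case $Q'\ge 0$ makes $-\int_0^t e^{-\alpha s}Q'\,dU\le 0$, so we only need the inequality. For the drift term, \eqref{equ: formal HJB} gives, for every $u,p\ge 0$ and $x\in[0,1]$,
\[
-\alpha Q(x)+\mathcal{L}_2Q(x)+\pi(p,u,x)\le 0,
\]
so the $ds$-integrand is bounded above by $-e^{-\alpha s}\pi(p(s),u(s),r(s))$. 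Substituting these bounds and rearranging yields
\[
Q(r_0)\ \ge\ e^{-\alpha t}Q(r(t)) + \int_0^t e^{-\alpha s}\pi\,ds - C_L\int_0^t e^{-\alpha s}dL(s) - \int_0^t e^{-\alpha s}\sigma Q'(r(s))\,dW(s).
\]

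Next we take expectations. Because $Q'$ is bounded on $[0,1]$, the stochastic integrand $e^{-\alpha s}\sigma Q'(r(s))$ is bounded, so the It\^o integral is a true martingale with zero mean. Boundedness of $Q$ gives $E[e^{-\alpha t}Q(r(t))]\to 0$ as $t\to\infty$. We then pass to the limit in the remaining terms. The $dL$ term increases monotonically in $t$, so monotone convergence applies to it. For the profit term we use admissibility: finiteness of $J$ means the integral converges absolutely, so dominated convergence applies. This yields $Q(r_0)\ge J(r_0,u,p)$, where $J$ includes the penalty $-C_L\,dL$ as in \eqref{equwithbm}. Taking the supremum over admissible controls then gives $V_2(r_0)\le Q(r_0)$.

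The main obstacle is this limiting step. Admissibility is stated only as finiteness of $J$, while $\pi$ may be unbounded: the term $-u$ is unbounded below, and when $a_1>1$ the revenue part can be large near $r=1$. We therefore need $E\int_0^\infty e^{-\alpha s}|\pi|\,ds<\infty$, or at least a justified interchange of limit and expectation. We would first try to read admissibility as integrability of the positive and negative parts of $\pi$. Failing that, we would split $\pi=\pi^+-\pi^-$, apply monotone convergence to each part, and argue that the $\pi^+$ contribution is finite by the HJB bound itself. Finiteness of $E\int_0^\infty e^{-\alpha s}dL(s)$ also needs care. If it is not given by admissibility, we would obtain it from the same inequality, using boundedness of $Q$ and the sign of the other terms. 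If necessary we would localize with stopping times $\tau_n$ before letting $n\to\infty$ and then $t\to\infty$.
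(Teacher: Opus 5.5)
Your proposal is correct and follows the paper's proof essentially step by step: It\^o's formula for $e^{-\alpha t}Q(r(t))$, the boundary conditions $Q'(0)=C_L$ and $Q'(1)=0$ to reduce the reflection terms to $C_L\,dL$, the HJB inequality for arbitrary $(u,p)$, the zero mean of the stochastic integral from boundedness of $Q'$, and letting $t\to\infty$ using boundedness of $Q$. The paper passes to the limit without comment, so your extra care there is an improvement, and your $\pi^{+}/\pi^{-}$ split does close that step: the HJB together with $Q'\ge 0$ and $R(u,x)\ge -\delta x$ gives the uniform bound $\pi\le \alpha\|Q\|_\infty+\frac{\sigma^2}{2}\|Q''\|_\infty+\delta\|Q'\|_\infty$, and monotone convergence then handles $\pi^-$ and $dL$.
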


\begin{proof}
We define a function $f(t, x) = e^{-\alpha t} Q(x)$, where $Q$ satisfies the assumptions mentioned above and $\alpha>0$ is a fixed constant. 
Employing the It\^{o}'s formula to obtain
\begin{equation*}
\begin{aligned}
    e^{-\alpha t}Q(r(t)) &= Q(r_0) + \int_0^t e^{-\alpha s} \left[\frac{\sigma^2}{2} Q''(r(s)) + Q'(r(s)) R(u(s), r(s)) - \alpha Q(r(s))\right]ds \\
    &\quad + \sigma \int_0^t e^{-\alpha s} Q' dW(s) 
    + \int_0^t e^{-\alpha s} C_LdL(s). 
\end{aligned}
\end{equation*}
Observe that the sufficient smoothness of $Q(\cdot)$ and the boundedness of $Q'(\cdot)$ imply that the stochastic integral term has an expected value of zero, since one can obtain 
\begin{equation*}
\begin{aligned}
    E\left[ \int_0^t (\sigma e^{-\alpha s}Q'(r(s)))^2ds \right] &= \sigma^2 E\left[\int_0^t e^{-2\alpha s}(Q'(r(s)))^2ds \right] \\
    &\leq {M}^2 \sigma^2 \int_0^t e^{-2\alpha s} ds < \infty,  
\end{aligned}
\end{equation*}
where $M>0$ serves as the upper bound of $Q'$. 
This further guarantees that the expected value of the stochastic integral is equal to zero. 
Now, taking the expected value on both sides to obtain the following equality:
\begin{equation*}
\begin{aligned}
    E\left[e^{-\alpha t}Q(r(t))\right] &= Q(r_0) + E\bigg[\int_0^{t} e^{-\alpha s} \left(\frac{\sigma^2}{2} Q''(r(s)) + Q'(r(s)) R(u(s), r(s)) - \alpha Q(r(s)) \right)ds \bigg] \\
    &\quad + C_L E\left[\int_0^t e^{-\alpha s}dL(s)\right]. 
\end{aligned}
\end{equation*}
Since $Q$ satisfies the equation \eqref{equ: formal HJB}, we have that for all $u \geq 0$ and $p\geq 0$, 
\begin{equation*}
    \begin{aligned}
        \alpha Q &= \sup_{u,p\geq 0} \left\{\frac{1}{2}\sigma^2 Q''(r(s)) + Q'(r(s)) R(u(s), r(s)) + \pi(p(s), u(s), r(s))\right\} \\
        &\geq \frac{1}{2}\sigma^2 Q''(r(s)) + Q'(r(s)) R(u(s), r(s)) + \pi(p(s), u(s), r(s)), 
    \end{aligned}
\end{equation*}
which further implies the following inequality:
\begin{equation*}
    -\pi(p(s), u(s), r(s)) \geq \frac{1}{2}\sigma^2 Q''(r(s)) + Q'(r(s)) R(u(s), r(s)) -\alpha Q. 
\end{equation*}
Therefore, we have 
\begin{equation*}
    E\left[e^{-\alpha t}Q(r(t))\right] \leq Q(r_0) - E\left[\int_0^t e^{-\alpha s} \pi(p(s), u(s), r(s))ds\right] + C_L E\left[\int_0^t e^{-\alpha s}dL(s)\right]. 
\end{equation*}
Let $t$ tend to infinity. Due to the boundedness of $Q(\cdot)$, and with some arrangements, we obtain
\begin{equation*}
% \begin{aligned}
    J(r_0, u, p) = E\left[\int_0^\infty e^{-\alpha s} \left(\pi(p(s), u(s), r(s))ds - C_L dL(s)\right) \right] 
    % &\leq Q(r_0) - E\left[e^{-\alpha t}Q(r(t))\right] \\
    \leq Q(r_0). 
% \end{aligned}
\end{equation*}
We further take the supremum on both sides over all the admissible controls $u, p\geq 0$ to obtain the upper bound of the value function such that 
\begin{equation}
    V_2(r_0) \leq Q(r_0), 
\end{equation}
for $r_0\in[0, 1]$, since the left-hand side coincides with the infinite-horizon discounted cost functional defined in \eqref{eq: cost functional}. 
This completes the proof. 
\end{proof}

The existence and uniqueness of the solution to the HJB equation \eqref{equ: formal HJB} can be established using standard results from the theory of differential equations (see, for example, \cite{wend1969existence}, \cite{evans2022partial}). 
We summarize this result in Proposition~\ref{prop: solution to HJB} below, with the proof deferred to Appendix~\ref{appendix: Proof of Proposition solution to HJB}. 

\begin{prop}
    There is a bounded twice continuously differentiable function $Q:[0, 1]\mapsto \mathbb{R}$ satisfying:
    \begin{enumerate}[(i)]
        % \item $Q$ is non-negative, strictly decreasing, and convex on $[0, \infty)$, 
        
        \item There is an $M>0$ such that $0\leq Q'(x) \leq M$ for all $x\in[0, 1]$, and
        
        \item $Q$ satisfies the formal HJB equation \eqref{equ: formal HJB} on $[0, 1]$.
    \end{enumerate}
    \label{prop: solution to HJB}
\end{prop}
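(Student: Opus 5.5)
We plan to produce $Q$ as a solution of the extended ODE \eqref{eq: ode with extensions}, subject to the Neumann conditions $Q'(0)=C_L$ and $Q'(1)=0$ stated after \eqref{equ: formal HJB}. Once we know $Q'\geq 0$, we have $F(Q')=(Q')^{\gamma/(\gamma-1)}$. Computing the supremum over $(u,p)$ in \eqref{equ: formal HJB} through the first-order conditions \eqref{eq: equations for u and p}, or through the price cap $p=p_0$ when $0<a_1\leq 1$ (Remark~\ref{remark: parameter a_1 <= 1}), then shows that \eqref{eq: ode with extensions} and \eqref{equ: formal HJB} coincide. So (ii) follows from (i) together with the ODE.

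We will write the ODE as $Q''=\frac{2}{\sigma^2}\Phi(r,Q,Q')$, where
\[
\Phi(r,q,w)=\alpha q+\delta r w-(\gamma-1)(1-r)^{\frac{\gamma}{\gamma-1}}F(w)-G(r,a_1).
\]
Here $\Phi$ is increasing in $q$, since $\alpha>0$, and it is locally Lipschitz in $(q,w)$. The plan is to obtain existence by the method of sub- and supersolutions, or equivalently a Leray--Schauder argument, for the Neumann problem. Since the ODE is nonlinear in $Q'$, the key is an a priori bound on $Q'$ (a Nagumo/Bernstein-type condition).

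\textbf{Step 1: bound on $Q$.} Constants serve as barriers. Put $K^+=\big(\sup_r G+\text{const}\big)/\alpha$, where the constant absorbs the boundary flux $C_L$. For example, take $\bar Q(r)=K+C_L(1-r)^2/2$ and choose $K$ large so that $\Phi(r,\bar Q,\bar Q')\geq \frac{\sigma^2}{2}\bar Q''$. A similarly shifted function serves as the lower barrier. This uses the boundedness of $G$ on $[0,1]$. The case $a_1>1$ needs extra care: there $G$ carries the factor $(1-r)^{1-a_1}$, which blows up at $r=1$. I expect this is the main obstacle. I would first try to treat $G$ as integrable, or restrict to the regime where $G$ is bounded; if neither works, I would truncate $G$ near $r=1$, solve the truncated problems, and pass to the limit using uniform estimates.

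\textbf{Step 2: gradient bound and sign.} The nonlinearity in $w$ grows at most like $|w|^{\gamma/(\gamma-1)}$. For $w<0$, however, $F\equiv0$, so $\Phi$ is linear in $w$ there. I would prove $Q'\geq 0$ by a maximum-principle argument on $w=Q'$. Differentiating the ODE gives
\[
\tfrac{\sigma^2}{2}w''=\alpha w+\delta w+\delta r w'-(\ldots)'-G'.
\]
Here $G'\geq 0$ by \eqref{eq: function G prime}, using $p_0\geq c_v$ in the second regime. Consider a negative interior minimum of $w$, where $w'=0$ and $F(w)=0$. There $\frac{\sigma^2}{2}w''=(\alpha+\delta)w-G'<0$, which contradicts minimality. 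The boundary values $C_L>0$ and $0$ are both nonnegative, so $w\geq 0$ everywhere. Next comes the upper bound $M$. At an interior maximum of $w$, the same differentiated equation forces
\[
(\alpha+\delta)w\leq G'+\partial_r\big[(\gamma-1)(1-r)^{\frac{\gamma}{\gamma-1}}\big]F(w),
\]
and the derivative of the coefficient is negative, so $(\alpha+\delta)w\leq G'$. This gives $M=\max\{C_L,\sup G'/(\alpha+\delta)\}$, again finite once $G'$ is bounded, which ties back to the obstacle in Step 1.

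\textbf{Step 3: existence and regularity.} With the a priori bounds on $|Q|$ and $|Q'|$ in hand, truncate $F$ outside $[0,M]$ so that $\Phi$ becomes globally Lipschitz. Apply Schauder's fixed-point theorem to the map sending $v$ to the solution of the linear Neumann problem
\[
\tfrac{\sigma^2}{2}Q''-\alpha Q=-\Phi_0(r,v,v'),
\]
which is uniquely solvable because $\alpha>0$. Alternatively, use a shooting argument on the free initial value $Q(0)$, as the paper suggests in Section~\ref{section4}: continuous dependence plus the intermediate value theorem yields $Q'(1)=0$. The bounds show the truncation is inactive. Continuity of the right-hand side then gives $Q\in C^2[0,1]$, and boundedness follows from Step 1.
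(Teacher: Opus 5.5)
Your plan leaves the regime $a_1>1$ open, and the truncate-and-pass-to-the-limit fallback cannot close it, because in that regime the conclusion itself is unattainable.

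For $r<1$ the supremum over $p$ equals $G(r,a_1)=c(1-r)^{1-a_1}r^{a_2}$, which tends to $+\infty$ as $r\to1$. At $r=1$ the supremum is $\sup_{p>0}a_0p^{1-a_1}=+\infty$, so \eqref{equ: formal HJB} cannot hold at $r=1$ at all. On $[0,1)$, any solution with $Q$ and $Q'\ge0$ bounded satisfies
$\frac{\sigma^2}{2}Q''=\alpha Q+\delta rQ'-(\gamma-1)(1-r)^{\frac{\gamma}{\gamma-1}}F(Q')-G(r,a_1)\to-\infty$ as $r\to1$. Hence $Q\notin C^2[0,1]$.

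If moreover $a_1\ge2$, then $\int_{1/2}^1 G(r,a_1)\,dr=\infty$. Integrating $Q''\le C-\frac{2}{\sigma^2}G(r,a_1)$ then forces $Q'(r)\to-\infty$, which contradicts $Q'\ge0$ even on $[0,1)$.

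So no choice of uniform estimates will produce the stated object. The most one can aim for is a weakened statement, for instance $Q\in C^1[0,1]\cap C^2[0,1)$ solving the equation on $[0,1)$ when $1<a_1<2$, and the statement must be changed accordingly. You will not find the missing step in the paper either. Its shooting argument asserts $C^2$ regularity, yet the third step of its proof of Proposition~\ref{prop: solution profile for Y_k^*} finds $W_{k^*}''\to-\infty$ at $x=1$. Its own \eqref{eq: W_k with integral} also gives $W_{k^*}'=Q''\to-\infty$ there. This affects the paper's own numerical case $a_1=1.1$.

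For $0<a_1\le1$ with $p_0\ge c_v$, $G(\cdot,a_1)$ is continuous and nonnegative on $[0,1]$, and there your route is a sound alternative to the paper's.

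\begin{itemize}
\item \emph{The paper's route.} It shoots on $k=W_k'(0)$, equivalently on $Q(0)=K_k$, which is affine in $k$; this is your fallback option. It uses Lemma~\ref{lemma: comparison lemma} for monotonicity in $k$, controls $k\to\pm\infty$ in two further lemmas, and defines $k^*$ by \eqref{eq: k^*}.
\item \emph{Your route.} You bound $Q$ by barriers, which works since $\Phi$ is strictly increasing in $q$. You bound $Q'$ by a maximum principle and then take a fixed point.
\end{itemize}

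Your sign argument for $Q'\ge0$ is exactly the paper's: a negative interior minimum of $w$ would give $\frac{\sigma^2}{2}w''=(\alpha+\delta)w-G'<0$. Your version gives explicit bounds and needs no asymptotics in $k$. The paper's version gives the ordered family $\{W_k\}$, which its numerics use to locate $k^*$.

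One step needs repair. Your bound $M=\max\{C_L,\sup G'/(\alpha+\delta)\}$ is infinite when $a_2<1$ and $p_0>c_v$, since $G'(r,a_1)\sim a_0a_2p_0^{-a_1}(p_0-c_v)r^{a_2-1}$ as $r\to0$. Use the first-order form instead. Because $w\ge0$, $F\ge0$ and $G\ge0$, you have $\frac{\sigma^2}{2}w'\le\alpha\|Q\|_\infty+\delta w$, with $w(0)=C_L$. Gronwall then bounds $w$ using only the barrier bound on $Q$, and the same inequality holds for the truncated problem.
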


It remains to show that the upper bound established in the verification lemma (Lemma~\ref{veriflemma}) is attainable by selecting specific control functions $u^*, p^* > 0$, which constitute the optimal solution and achieve the maximum in \eqref{equ: formal HJB}. 
The corresponding optimal state process $r^*$ satisfies the following reflected stochastic differential equation:
\begin{equation}
    dr^*(t) = R(u^*(t), r^*(t))dt + \sigma dW(t) + dL^*(t) - dU^*(t). 
    \label{eq: optimal r*}
\end{equation}
The following result, Theorem~\ref{An optimal control}, establishes that the upper bound obtained in Lemma~\ref{veriflemma} is indeed achievable. 

\begin{theorem}
Let $Q(\cdot)$ be the smooth solution to \eqref{equ: formal HJB}. 
Then the ordered quadruple $(r_0, u^*, p^*, r^*)$ is an optimal admissible control system, where a weak solution $r^*$ satisfying the corresponding \eqref{eq: optimal r*} is an optimal state process. 
\label{An optimal control}
\end{theorem}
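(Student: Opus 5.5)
The plan is the standard second half of the verification argument: apply Itô's formula to $e^{-\alpha t}Q(r^*(t))$ along the optimally controlled reflected process, and show that every inequality used in the proof of Lemma~\ref{veriflemma} becomes an equality. The proof has four steps.

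\textbf{Step 1: define the feedback controls.} Take $Q$ from Proposition~\ref{prop: solution to HJB}, so that $0\le Q'\le M$, and set
\[
u^*(t)=\bigl((1-r^*(t))Q'(r^*(t))\bigr)^{\frac{\gamma}{\gamma-1}},\qquad
p^*(t)=\frac{a_1c_v(1-r^*(t))}{a_1-1}
\]
when $a_1>1$, and $p^*\equiv p_0$ when $0<a_1\le 1$. This is the maximizer in \eqref{equ: formal HJB} computed in \eqref{eq: equations for u and p}; the exponent $\gamma/(\gamma-1)$ follows from the first-order condition $Q'u^{1/\gamma-1}(1-r)=1$. Strict concavity of $u\mapsto Q'\gamma u^{1/\gamma}(1-r)-u$ for $\gamma>1$ shows this is the global maximizer in $u$. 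If $Q'(r)(1-r)=0$, then $u^*=0$, which matches the extension through $F$. Likewise, the $p$-part is maximized by $p^*$ (for $a_1\le1$, on $[0,p_0]$).

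\textbf{Step 2: existence of the state process.} Since $Q'$ is continuous and bounded on $[0,1]$, the drift $b(x)=\gamma\bigl((1-x)Q'(x)\bigr)^{\frac{1}{\gamma-1}}(1-x)-\delta x$ is bounded and measurable, in fact continuous, on $[0,1]$. A weak solution of the doubly reflected SDE \eqref{eq: optimal r*} then exists. I would obtain it by taking the Skorokhod map on $[0,1]$ applied to $r_0+\sigma W$ and removing the drift with a Girsanov change of measure, which is legitimate because the drift is bounded. This step yields $r^*\in[0,1]$ together with local times $L^*,U^*$ satisfying the complementarity conditions of Remark~\ref{rm: L and U}.

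\textbf{Step 3: Itô's formula with equality.} Apply Itô's formula to $e^{-\alpha t}Q(r^*(t))$. The $dU^*$ term vanishes because $Q'(1)=0$ and $U^*$ increases only when $r^*=1$. The $dL^*$ term equals $Q'(0)\,dL^*=C_L\,dL^*$, since $L^*$ increases only when $r^*=0$. This is exactly where the boundary conditions enter. Because the sup in \eqref{equ: formal HJB} is attained at $(u^*,p^*)$,
\[
\tfrac{\sigma^2}{2}Q''+Q'R(u^*,r^*)-\alpha Q=-\pi(p^*,u^*,r^*).
\]
The stochastic integral is a true martingale because $Q'$ is bounded. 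Taking expectations gives
\[
E\bigl[e^{-\alpha t}Q(r^*(t))\bigr]=Q(r_0)-E\Bigl[\int_0^t e^{-\alpha s}\bigl(\pi\,ds-C_L\,dL^*(s)\bigr)\Bigr].
\]

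\textbf{Step 4: pass to the limit.} Let $t\to\infty$. The left side tends to $0$ because $Q$ is bounded. The integrand $\pi(p^*,u^*,r^*)$ is bounded on $[0,1]$, since $u^*\le M^{\gamma/(\gamma-1)}$ and $G$ is bounded. Showing $E\int_0^\infty e^{-\alpha s}\,dL^*(s)<\infty$ is the main obstacle. I would handle it by applying Itô's formula to $e^{-\alpha t}r^*(t)$, which gives
\[
E\int_0^t e^{-\alpha s}\,dL^*\le 1+\|b\|_\infty/\alpha+E\int_0^t e^{-\alpha s}\,dU^*,
\]
and then bounding $dU^*$ symmetrically via $e^{-\alpha t}(1-r^*(t))$. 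Alternatively, one can use $\phi(x)=(x-1)^2$, whose derivative vanishes at $1$ and equals $-2$ at $0$, which isolates $L^*$ directly. Monotone and dominated convergence then give $J(r_0,u^*,p^*)=Q(r_0)$, so in particular $J$ is finite and the system is admissible. Combined with Lemma~\ref{veriflemma}, this yields $V_2(r_0)=Q(r_0)=J(r_0,u^*,p^*)$, which proves optimality.
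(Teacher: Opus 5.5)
Your proposal is correct and follows the same route as the paper: It\^{o}'s formula for $e^{-\alpha t}Q(r^*(t))$ under the maximizing feedback controls, equality in \eqref{equ: formal HJB}, the $dU^*$ term killed by $Q'(1)=0$ and the $dL^*$ term turned into $C_L\,dL^*$ by $Q'(0)=C_L$, a true martingale part because $Q'$ is bounded, and $t\to\infty$ using boundedness of $Q$. Along the way you supply details the paper leaves implicit (existence via the Skorokhod map and Girsanov, $p^*=p_0$ for $0<a_1\le 1$, admissibility). One correction: for $a_1>1$, $G(r,a_1)=c(1-r)^{1-a_1}r^{a_2}$ is unbounded as $r\to 1$, so your claim that $G$ is bounded is false in that regime. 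Boundedness of $\pi(p^*,u^*,r)$ should instead be read off from the equation, $\pi(p^*,u^*,r)=\alpha Q-\tfrac{\sigma^2}{2}Q''-Q'R(u^*,r)$ with $Q\in C^2[0,1]$, or handled by monotone convergence since $G\ge 0$. After that, $E\int_0^\infty e^{-\alpha s}\,dL^*(s)<\infty$ follows directly from the It\^{o} identity itself. Note also that your ``symmetric'' bound via $e^{-\alpha t}(1-r^*(t))$ is circular, since it is the same identity rearranged; the test function $(x-1)^2$ does work.
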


\begin{proof}
It is straightforward that \eqref{eq: optimal r*} admits a weak solution for all $t\geq0$ (see \cite{karatzas1991brownian} and \cite{oksendal2013stochastic} for more details). 
With the help of \eqref{equ: formal HJB} and the It\^{o}'s formula, we obtain
\begin{equation*}
\begin{aligned}
    e^{-\alpha t}Q(r^*(t)) &= Q(r_0) + \int_0^t e^{-\alpha s} \left[\frac{\sigma^2}{2} Q''(r^*(s)) + Q'(r^*(s)) R(u^*(s), r^*(s)) - \alpha Q(r^*(s))\right]ds \\
    &\quad + \sigma \int_0^t e^{-\alpha s} Q' dW(s)
    +\int_0^t e^{-\alpha s} C_LdL^*(s), 
\end{aligned}
\end{equation*}
where $\alpha>0$ is a fixed constant and the optimal strategies
\begin{equation}
    u^*=\left((1-r^*) Q' \right)^{\frac{\gamma}{\gamma - 1}}, \quad p^*=\frac{a_1 c_v (1-r^*)}{a_1 - 1}.
    \label{eq: optimal u^* and p^*}
\end{equation}
Since the stochastic integral term has an expected value of zero, by taking the expected value on both sides and some simple algebraic manipulations, we can further obtain the following:
\begin{equation*}
    Q(r_0) - E\left[e^{-\alpha t}Q(r^*(t))\right]  =  E\left[\int_0^t e^{-\alpha s} \left(\pi(p^*(s), u^*(s), r^*(s))ds - C_L dL^*(s)\right)\right]. 
\end{equation*}
The boundedness of $Q(\cdot)$ guarantees that $\lim_{t\to\infty} E[e^{-\alpha t}Q(r^*(t))]=0$. 
Further, let $t$ tend to positive infinity to obtain the upper bound, which completes the proof. 
\end{proof}

The results above establish the consistency between the solution to the HJB equation \eqref{equ: formal HJB} and the solution to the stochastic control problem \eqref{equwithbm}. 
Moreover, the optimal control pair $(u^*, p^*)$ is explicitly characterized in \eqref{eq: optimal u^* and p^*}.

\section{Numerical examples}\label{section5}

% \subsection{Classical numerical methods}

In this section, we present concrete numerical examples of the optimal strategies derived in Section~\ref{section4}. These examples allow us to investigate the influence and validity of the parameter $a_1$ on the optimal strategies, particularly in the context of the extended model described in \eqref{eq: ode with extensions}. We consider two distinct cases: $a_1 > 1$ and $0 < a_1 < 1$.

\begin{example}[$a_1 > 1$]
\label{example1}
    We now consider the simulation of the sample solution illustrated in Figure~\ref{fig: W_k with different k values}. Based on this reproducible numerical example, it is natural to construct the corresponding optimal state process $r^*$ and the associated optimal strategies $(u^*, p^*)$, as defined in \eqref{eq: optimal r*} and \eqref{eq: optimal u^* and p^*}, respectively.
    For this simulation, we fix the parameters as follows: $C_L = 0.5$, $\sigma^2 = 2$, $\gamma = 5$, $a_1 = 1.1$, $a_2 = 5$, $\delta = 0.5$, and $c_v = 0.2$. We consider the time horizon $t \in [0, T]$ with $T = 2$, and set the initial recycling rate to $r_0 = 0.5$. Using a time step of $0.002$, we obtain the sample path of the optimal state process shown in Figure~\ref{fig: optimal r star}, while the corresponding optimal strategies evolve over time as depicted in Figure~\ref{fig: optimal u star and p star}.
    \begin{figure}[h!]
        \centering
        \includegraphics[scale=0.5]{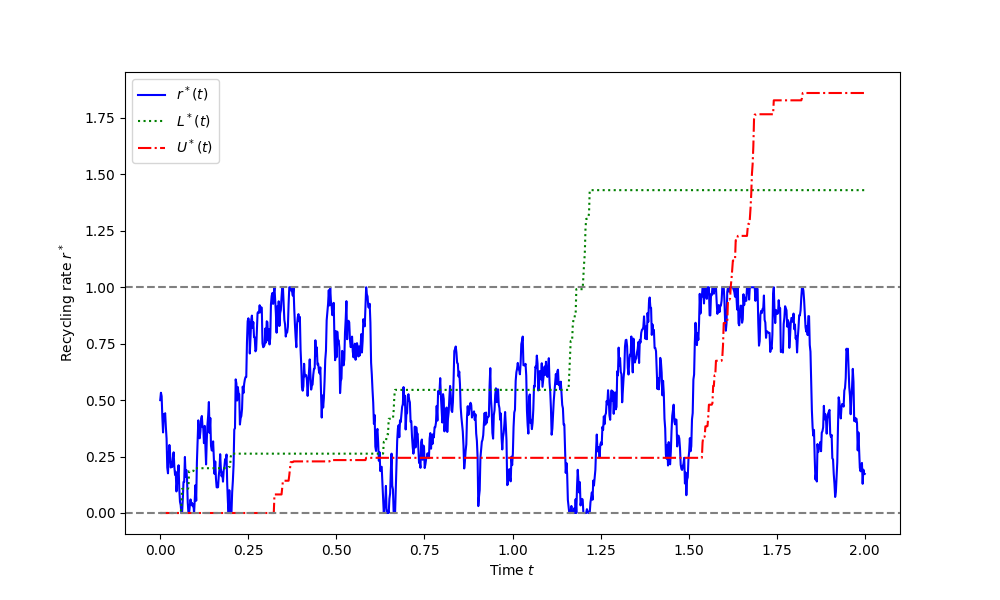}
        \caption{Sample path of optimal state process \eqref{eq: optimal r*}, where the blue solid line exhibits the optimal state process $r^*$, the red dashed line $U^*$ denotes the optimal local time process of $r^*$ reaching the upper boundary 1, and the green dotted line $L^*$ represents the optimal local time process of $r^*$ reaching the lower boundary zero. }
        % this graph has a1 = 1.1
        \label{fig: optimal r star}
    \end{figure}
    \begin{figure}[h!]
        \centering
        \includegraphics[scale=0.5]{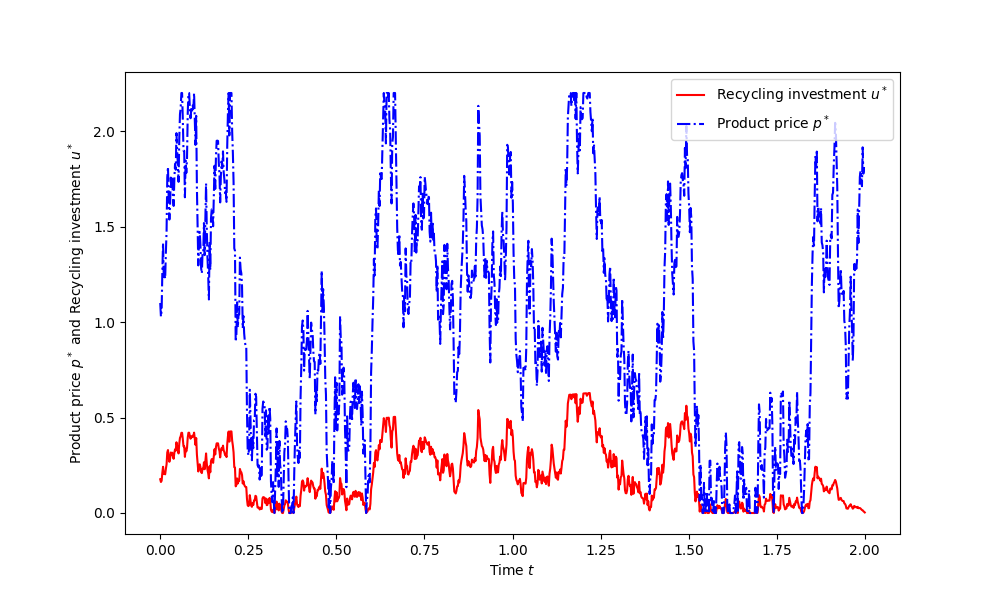}
        \caption{Sample path of optimal policies \eqref{eq: optimal u^* and p^*}, where the blue dashed line exhibits the optimal price $p^*$, and the red solid line $u^*$ denotes the optimal recycling investment. }
        % this graph has a1 = 1.1
        \label{fig: optimal u star and p star}
    \end{figure}
\end{example}

As illustrated in the simulation, the recycling rate remains confined within the interval $[0, 1]$, with the local time processes activated whenever the state process $r^*$ reaches either boundary. Furthermore, the optimal recycling investment $u^*$ is consistently bounded above by the product price $p^*$, and both are influenced by the evolution of the recycling rate $r^*$. According to \eqref{eq: optimal u^* and p^*}, the relationship between $u^*$ and $r^*$ is polynomial of order $\frac{\gamma}{\gamma - 1}$, while $p^*$ depends linearly on $r^*$.

\begin{example}[$0 < a_1 < 1$]

    As a complementary example to the previous case, we now consider the scenario where $a_1 = 0.3$, such that the structure of \eqref{eq: function G} ensures the optimal price $p^*$ attains the upper bound $p_0$ due to its monotonicity (see Remark~\ref{remark: parameter a_1 <= 1}). All other parameters are kept unchanged. 
    This example illustrates a steady optimal pricing policy, as shown in Figure~\ref{fig: optimal u star and p_0 star for small a_1}, where the price remains fixed at $p_0$ and dominates the recycling investment $u^*$ throughout the time horizon.
    \begin{figure}[h!]
        \centering
        \includegraphics[scale=0.5]{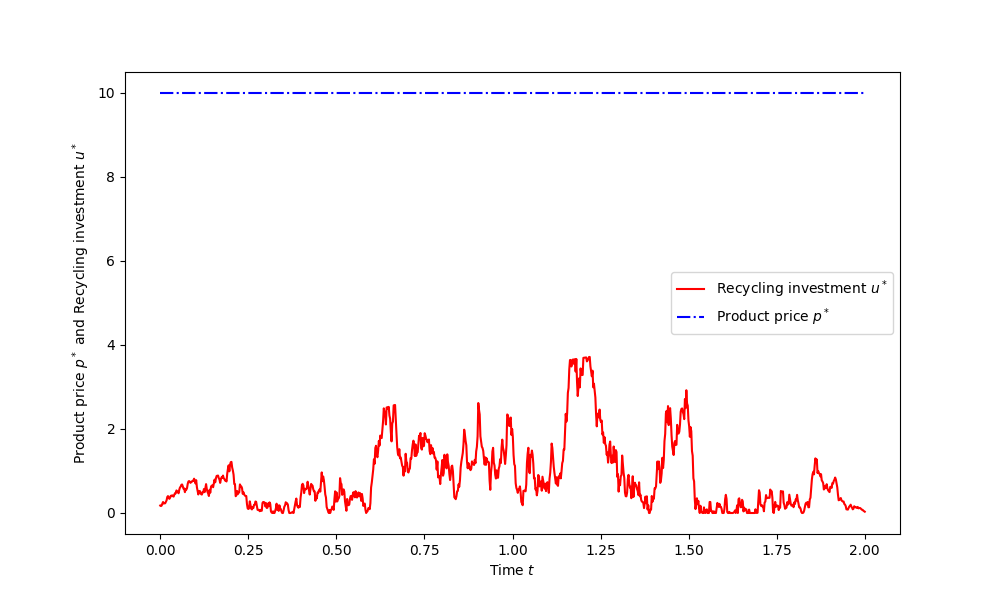}
        \caption{Sample path of optimal policies \eqref{eq: optimal u^* and p^*} when $0 < a_1 \leq 1$, where the blue dashed line exhibits the constant optimal price $p^*$, and the red solid line $u^*$ denotes the optimal recycling investment. }
        % this graph has a1 = 0.3
        \label{fig: optimal u star and p_0 star for small a_1}
    \end{figure}
\end{example}

In Example~\ref{example1}, we presented the optimal solution profile for the state process and its corresponding control strategies. The implementation is primarily based on Proposition~\ref{prop: solution to HJB}, where we introduced a parameterized differential equation associated with the formal HJB equation, along with a free initial condition.
The optimal solution is obtained by identifying an optimal initial condition $k^*$ such that Proposition~\ref{prop: solution to HJB} holds and the cost functional is maximized. The solution profiles for various values of $k$ are illustrated in Figure~\ref{fig: W_k with different k values}.
In the next example, we retain the assumptions from Example~\ref{example1} and explore the influence of varying the initial condition $k$ in comparison to the optimal value $k^*$, as characterized in Proposition~\ref{prop: solution profile for Y_k^*}. For clarity of exposition, we consider three representative values satisfying $k_1 < k^* < k_3$.

\begin{example}[Variation of initial condition in Proposition \ref{prop: solution profile for Y_k^*}]
    We retain the parameters from Example~\ref{example1} and conduct simulations using different initial values, specifically $k_1 = -0.5$ and $k_3 = 0.5$, such that $k_1 < k^* < k_3$. The optimal value $k^*$ is explicitly defined in Equation~\eqref{eq: k^*}. A comparison of the resulting recycling processes is illustrated in Figure~\ref{fig: compare r}. 
    \begin{figure}[h!]
        \centering
        \includegraphics[scale=0.5]{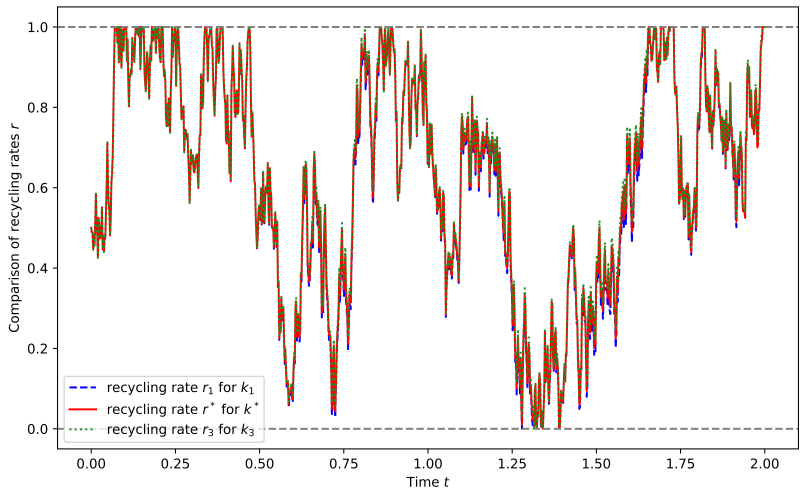}
        \caption{Sample path of state processes with respect to $k_1 < k^* < k_3$ when $0 < a_1 \leq 1$, where the blue dashed line exhibits the recycling process for $k_1$, and the red solid line denotes the optimal recycling process for $k^*$, and the cyan dotted line represents the recycling process for $k_3$. }
        % this graph has a1 = 1.1
        \label{fig: compare r}
    \end{figure}
    Figures~\ref{fig: compare p} and~\ref{fig: compare u} illustrate the comparison of product price $p$ and recycling investment $u$, respectively, under different initial conditions satisfying $k_1 < k^* < k_3$. 

    \begin{figure}[h!]
        \centering
        \includegraphics[scale=0.5]{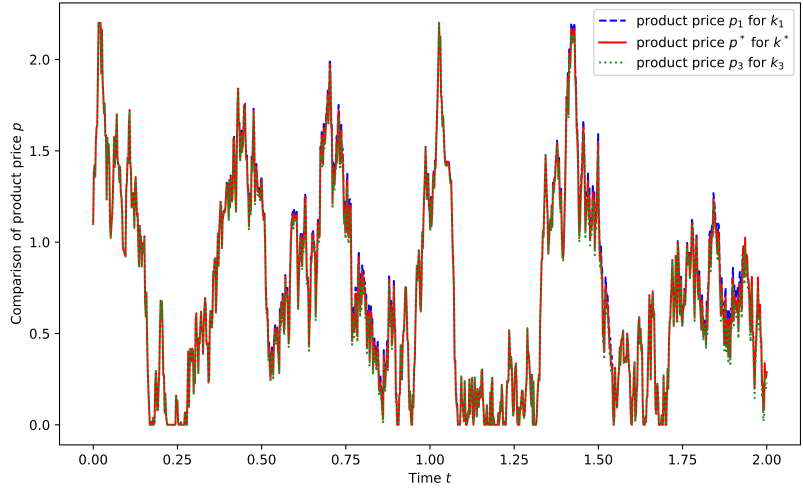}
        \caption{Sample path of product price with respect to $k_1 < k^* < k_3$, where the blue dashed line exhibits product price for $k_1$, the red solid line denotes product price for $k^*$, and the cyan dotted line represents product price for $k_3$. }
        % this graph has a1 = 1.1
        \label{fig: compare p}
    \end{figure}
    \begin{figure}[h!]
        \centering
        \includegraphics[scale=0.5]{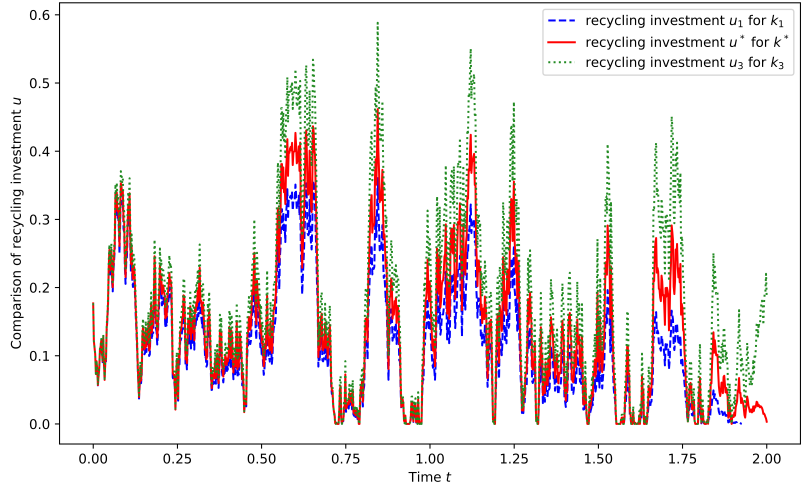}
        \caption{Sample path of recycling investment with respect to $k_1 < k^* < k_3$, where the blue dashed line exhibits recycling investment for $k_1$, the red solid line denotes recycling investment for $k^*$, and the cyan dotted line represents recycling investment for $k_3$. }
        % this graph has a1 = 1.1
        \label{fig: compare u}
    \end{figure}
\end{example}

We observe that the optimal state process and control strategies fluctuate within the bounds defined by the state processes and strategies corresponding to $k_1$ and $k_3$. Figure~\ref{fig: W_k with different k values} presents a comparison of the value functions, generated based on the derivative of $Q$ as defined in Proposition~\ref{prop: solution to HJB}. The simulation for $Q$ itself is omitted, as its variation is negligible compared to that of its derivative. 
Moreover, the figure clearly illustrates the solution profile, aligning with the analytical results discussed in Appendix~\ref{appendix: Proof of Proposition solution to HJB}.

%%%%%%%%%%%%%%%%%%%%%%%%%%%%%%%%%%%%%%%%%%%%%%%%%%%%%%%%%%%%%%%%%%%%%%%%%%%%%%%
% \subsection{Reinforcement Learning in joint stochastic control}\label{sec6 RL}

% \todo[color=orange!30]{*** include RL}

%%%%%%%%%%%%%%%%%%%%%%%%%%%%%%%%%%%%%%%%%%%%%%%%%%%%%%%%%%%%%%%%%%%%%%%%%%%%%%%

\section{Conclusion}\label{section7}

In this paper, we extend the framework of production and process innovation to a general setting influenced by the propagation of external Gaussian chaos. To ensure the practicality of the stochastic model, we introduce two key constraints via local time processes, which serve as incentives aligned with green economy policies.
To investigate the joint dynamics of recycling investment and product price within a regulated stochastic recycling rate model, we formulate an HJB equation and propose suitable function extensions. A detailed derivation is provided to establish the connection between the stochastic control problem and the corresponding HJB equation. To solve the HJB equation, we reduce it to a non-linear ODE and conduct a theoretical analysis to characterize the solution profile and associated control variables. 
Furthermore, we carefully examine the assumptions imposed on system parameters, highlighting their critical role in determining optimal strategies. We also interpret these assumptions in practical terms to ensure their applicability and tractability. 
Finally, we present illustrative numerical analyses to demonstrate the significance of parameter selection in shaping system behavior and outcomes.

There are several potential extensions to the regulated stochastic control problem proposed in this study. 
While we have focused on an autonomous differential equation, time may play a critical role in practical applications, potentially influencing system dynamics. In this context, a time-dependent formulation could be considered, leading to a partial differential equation (PDE). Although the analysis of such systems may be more complex, the distinctions and advantages between autonomous and time-dependent models merit further exploration. 
Additionally, the efficiency of algorithms for solving joint stochastic control problems remains an open question. In our future work, we aim to evaluate the performance and stability of non-conventional approaches. For example, reinforcement learning (RL) algorithms by comparing them with theoretical results, thereby demonstrating the practical applicability of the regulated stochastic model in real-world scenarios. 
These works will be considered in our subsequent research. 
Relevant studies employing deep learning techniques to solve PDEs and stochastic control problems include, for example,  \cite{baydin2018automatic}, \cite{purwins2019deep}, \cite{raissi2019physics}, \cite{zhang2019quantifying}, \cite{mao2020physics}, \cite{meng2020ppinn}, \cite{JagtapAD2020XPINNs}, \cite{raissi2020hidden}, \cite{lu2021deepxde}, \cite{dai2022queueing}, \cite{xie2025single}.

%\section{Acknowledgement}
%This work  was partially supported by

% %%%%%%%%%%%%%%%%%%%%%%%%%%%%%%
\newpage
\appendix

\section{Proof of Proposition \ref{prop: solution to HJB}}
\label{appendix: Proof of Proposition solution to HJB}

It suffices to consider \eqref{eq: ode with extensions}, as any solution to this equation can be shown to also satisfy the formal HJB equation~\eqref{equ: formal HJB}. Notably, \eqref{eq: ode with extensions} is a non-linear second-order ordinary differential equation, which can be interpreted as a boundary value type problem.
To analyze the solution profile, we impose an initial condition and parameterize the equation accordingly, enabling a more tractable investigation of its behavior. 

To this end, we consider a family of solutions $\{Y_k\}_{k \in \mathbb{R}}$ to the following differential equation parameterized by $k \in \mathbb{R}$: 
\begin{equation}
\begin{aligned}
    \frac{\sigma^2}{2} Y_k'' + (\gamma - 1)(1 - x)^{\frac{\gamma}{\gamma - 1}} F(Y_k') 
    -\delta x Y_k' 
    + G(x, a_1) &= \alpha Y_k, \\
    Y_k'(0) &= C_L, \\
    Y_k'(1) &= 0.  
    \label{eq: parameterized Y_k}
\end{aligned}
\end{equation}
Our objective is to find a suitable $k \in \mathbb{R}$ so that 
\begin{enumerate}[(i)]
    \item There exists a solution $Y_k(\cdot)$ to \eqref{eq: parameterized Y_k} defined on $[0, 1]$, and
    
    \item $Y_k(x)$ and $Y_k'(x)$ are bounded for all $x\in [0, 1]$. 
\end{enumerate}
We intend to reveal the solution profile and its properties in terms of the parameter $k$ along with the variation of $x$ values. 

For the ease of analysis, we introduce $W_k:= Y_k'$, which further renders
\begin{equation}
    \frac{\sigma^2}{2} W_k' + (\gamma - 1) (1 - x)^{\frac{\gamma}{\gamma - 1}} F(W_k) - \delta x W_k + G(x, a_1) = \alpha \int_0^x W_k(s)ds + \alpha K_k, 
    \label{eq: W_k with integral}
\end{equation}
where $K_k$ is a constant obtained by integrating the above substitution such that $Y_k(x) = \int_0^x W_k(s)ds + K_k$ for each $x \in [0, 1]$. 
Moreover, we can derive a representation of $K_k = \frac{1}{\alpha}(\frac{\sigma^2}{2}W_k'(0) + (\gamma - 1) F(C_L))$ for later analysis. 
Its initial conditions are given by $W_k(0) = C_L$, and in addition, we impose $W'_k(0) = k$. 
% , where $W'_k(0) = k$ and $W_k(0) = C_L$. 
Here, we employ the notation $K_k$ to exhibit the linear dependence of the constant $K$ on the parameter $k$. 

To exhibit the existence, we can rewrite the above equation as a system such that 
\begin{equation}
\begin{cases}
    Y_{k}' &= W_k, \\
    W_k' &= \frac{2}{\sigma^2} \left[(1 - \gamma) (1 - x)^{\frac{\gamma}{\gamma - 1}} F(W_k) + \delta x W_k - G(x, a_1) + \alpha Y_k\right], 
    \end{cases}
\end{equation}
with initial conditions $W_k(0) = Y_k'(0) = C_L$, $W_k'(0) = k$, and $Y_k(0) = K_k$. 
It is straightforward that there exists a unique solution to initial-value problems for fixed $k$ (cf. \cite{coddington1955theory}, \cite{brauer1989qualitative}, \cite{sacks2017techniques}). 
In order to fulfill the other boundary condition, it suffices to find a suitable $k$ such that the above objectives (i) and (ii) are satisfied in addition to $W_k(1)  = 0$.

The following lemma exhibits a comparison result with respect to various values of $k$, and we denote this result as the comparison lemma. 

\begin{lemma}[Comparison lemma]
Let $k_2<k_1$. Consider the solution to \eqref{eq: W_k with integral}, $W_{k_1}(\cdot)$ and $W_{k_2}(\cdot)$, respectively. 
Then, we have $W_{k_1}(x) \geq W_{k_2}(x)$ for all $x\in [0, 1]$ (equality holds only if $x = 0, 1$). 
Moreover, the corresponding $Y_{k_1}(x)$ is greater than $Y_{k_2}(x)$. 
\label{lemma: comparison lemma}
\end{lemma}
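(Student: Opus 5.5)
The plan is to compare the two solutions through their difference and a first-crossing argument near $x=0$. Set $D(x) = W_{k_1}(x) - W_{k_2}(x)$ and $E(x) = Y_{k_1}(x) - Y_{k_2}(x)$. Both $W_{k_i}$ solve the initial value problem with $W_{k_i}(0)=C_L$ and $W_{k_i}'(0)=k_i$, so $D(0)=0$ and $D'(0)=k_1-k_2>0$. Hence $D>0$ on some interval $(0,\varepsilon)$.

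The constant $K_k$ enters through $Y_k(0)=K_k$. From the representation $K_k = \frac{1}{\alpha}\bigl(\frac{\sigma^2}{2}k + (\gamma-1)F(C_L)\bigr)$ (evaluating \eqref{eq: W_k with integral} at $x=0$, where $G(0,a_1)=0$ since $a_2>0$), we get
\[
E(0)=K_{k_1}-K_{k_2}=\frac{\sigma^2}{2\alpha}(k_1-k_2)>0 .
\]

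Next, subtract the two copies of the first-order system. Writing $F(W_{k_1})-F(W_{k_2}) = \beta(x)D(x)$, with $\beta$ a difference quotient of $F$ (which is $C^1$ and nondecreasing on $\mathbb{R}$ because $\gamma>1$, so $\beta\geq 0$ and $\beta$ is locally bounded), gives the linear system
\[
E' = D, \qquad \frac{\sigma^2}{2} D' = \bigl[(1-\gamma)(1-x)^{\frac{\gamma}{\gamma-1}}\beta(x) + \delta x\bigr] D + \alpha E .
\]
Let $x_0=\inf\{x\in(0,1]: D(x)\le 0\}$ and suppose $x_0<1$. On $[0,x_0]$ we have $D\geq 0$, so $E$ is nondecreasing and $E(x_0)\ge E(0)>0$. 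At $x_0$ we have $D(x_0)=0$, so the equation gives $\frac{\sigma^2}{2}D'(x_0)=\alpha E(x_0)>0$. But a point where $D$ first returns to zero from positive values must have $D'(x_0)\le 0$, a contradiction. Therefore $D>0$ on $(0,1)$ and $E>0$ on $[0,1]$, which gives both claims: $W_{k_1}\ge W_{k_2}$ and $Y_{k_1}>Y_{k_2}$.

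The main obstacle is the parenthetical claim that equality can hold at $x=1$. The argument above yields strict inequality on $(0,1]$ whenever both solutions exist on $[0,1]$, so equality at $x=1$ only makes sense for the boundary condition $W_k(1)=0$. That condition cannot hold for two different $k$ under this comparison, since $D(1)>0$. I would therefore read the statement as strict inequality on $(0,1)$, with the endpoint $x=1$ allowed only in a limiting or blow-up sense. I would also check separately that both solutions exist on the whole interval, which the global existence for fixed $k$ is asserted to provide. A second point needing care is that $F$ is continuously differentiable at $0$, so that $\beta$ is well defined and locally bounded; this holds because $\frac{\gamma}{\gamma-1}>1$.
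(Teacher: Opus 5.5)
Your proof is correct and follows essentially the same route as the paper. Both take the difference $f=W_{k_1}-W_{k_2}$ and pass to the first point $a$ where it returns to zero. There the $F$- and $\delta x$-terms vanish, so the equation forces $\frac{\sigma^2}{2}f'(a)=\alpha\bigl(K_{k_1}-K_{k_2}+\int_0^a f\bigr)>0$, which contradicts $f'(a)\le 0$.

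Because your contradiction is a strict inequality, it also covers the tangential-touching case that the paper treats separately. You are right that nothing justifies the paper's use of $f(1)=0$: the same first-return argument, applied at $x_0=1$ with a one-sided derivative, gives $f(1)>0$ whenever both solutions reach $x=1$. So the clause ``equality only at $x=0,1$'' holds at $x=1$ only vacuously.
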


\begin{proof}
    Since $W_{k_1}(0) = W_{k_2}(0) = C_L$ and $W_{k_1}'(0) = k_1 > k_2 = W_{k_2}'(0)$, we may introduce a comparison function $f(x) = W_{k_1}(x) - W_{k_2}(x)$ such that $f(0) = 0$ and $f'(0) > 0$. 
    Note that since $f(1) = W_{k_1}(1) -W_{k_2}(1) = 0$, there exists some $\hat{x}\in [0, 1]$ so that $f'(\hat{x}) = 0$. 
    Suppose that there is an $a_0\in (0, 1)$ so that $f(a_0) < 0$. 
    This implies that the paths of $f(\cdot)$ across the $x$-axis, i.e., we can find an $a \in (0, a_0)$ so that $f$ crosses $x$-axis for the first time so that $f(a) = 0$ and $f'(a) \leq 0$. 
    If we consider the difference of \eqref{eq: W_k with integral} with respect to $k_1$ and $k_2$ at $x=a$, we have
    \begin{align*}
    &\quad \frac{\sigma^2}{2} (W_{k_1}'(a) - W_{k_2}'(a)) 
    + (\gamma - 1) (1 - a)^{\frac{\gamma}{\gamma - 1}} \left(F(W_{k_1}(a)) - F(W_{k_2}(a))\right)
    -\delta a (W_{k_1}(a) - W_{k_2}(a)) \\
    &= \alpha \int_0^a (W_{k_1}(s) - W_{k_2}(s))ds + \alpha (K_{k_1} - K_{k_2}), 
    \end{align*}
    which further implies 
    \begin{equation}
        \frac{\sigma^2}{2} f'(a) 
        + (\gamma - 1) (1 - a)^{\frac{\gamma}{\gamma - 1}} \left(F(W_{k_1}(a)) - F(W_{k_2}(a))\right)
        -\delta a f(a) = \alpha \int_0^a f(s)ds + \alpha (K_{k_1} - K_{k_2}). 
        \label{eq: difference of W_{k_1} and W_{k_2}}
    \end{equation}
    Notice that the left-hand side is non-positive since $f'(a)\leq0,f(a) = 0$, and $F(W_{k_1}(a)) - F(W_{k_2}(a)) = 0$. However, the right-hand side indicates a strictly positive value since $K_{k_1} - K_{k_2} = \frac{\sigma^2}{2\alpha} (k_1 - k_2) > 0$. The contradiction suggests that such $a_0$'s do not exist. Therefore, $f(\cdot)$ does not take negative values. 

    Further, we intend to rule out the case that a local minimum presents on the $x$-axis tangentially. Suppose there is an $a\in [0, 1]$ so that $f(a) = 0$ and $f'(a) = 0$. 
    We observe that the left-hand side of \eqref{eq: difference of W_{k_1} and W_{k_2}} vanishes since $f'(a) = 0$, $F(W_{k_1}(a)) - F(W_{k_2}(a)) = 0$, and $f(a) = 0$. However, this contradicts the strictly positive right-hand side. 
    Thus, $f(x) \geq 0$ for all $x\in[0, 1]$. That is $W_{k_1}(x) \geq W_{k_2}(x)$ and equality holds only when $x = 0, 1$. 
    Since $Y_k(x) = \int_0^x W_k(s)ds + K_k$, we have $Y_{k_1}(x) - Y_{k_2}(x) > 0$. This completes the proof. 
\end{proof}

Next, we concern the profile of $W_k(\cdot)$ for fixed $k \in \mathbb{R}$ and its properties. 
To this end, we rewrite \eqref{eq: W_k with integral} as a second-order equation: 
\begin{equation}
    \frac{\sigma^2}{2} W_k'' - \gamma (1 - x)^{\frac{1}{\gamma - 1}} F(W_k) 
    + (\gamma - 1) (1 - x)^{\frac{\gamma}{\gamma - 1}} F'(W_k) W_k'  - \delta x W_k' 
    +G'(x, a_1)
    = (\alpha + \delta) W_k, 
\label{eq: W_k second order}
\end{equation}
where $F'$ and $G'$ are defined in \eqref{eq: function F prime} and \eqref{eq: function G prime}, respectively. 
Fix $k \in \mathbb{R}$, 
notice that if $W_k'(\xi) = 0$ for some $\xi \in [0, 1]$, \eqref{eq: W_k second order} at $\xi$ implies
\begin{equation}
    \frac{\sigma^2}{2} W_k''(\xi) - \gamma (1 - \xi)^{\frac{1}{\gamma - 1}} F(W_k(\xi))
    + G'(\xi, a_1)
    = (\alpha + \delta) W_k(\xi). 
    \label{eq: ode with w'(xi) = 0}
\end{equation}
In addition, if $W_k(\xi) = 0$ for some $\xi \in(0, 1]$, we have 
\begin{equation}
    \frac{\sigma^2}{2} W_k''(\xi) 
    =- G'(\xi, a_1)
    \label{eq: W_k second when W_k(xi) = 0}
\end{equation}
The convexity of $W_k$ at $\xi$ depends on the sign of $G'(\xi, a_1)$ and the constrains of $a_1$ and $a_2$, which are in twofold: $a_1 >1$ and $0 < a_1 \leq 1$, by \eqref{eq: function G prime}. 

If $a_1 > 1$, then the convexity relies on the sign of $(a_1 - 1) + (1-\xi) a_2 \xi^{-1}$. Note that since $\xi\in(0, 1)$ and $a_2 >0$, it is guaranteed that $(a_1 - 1) + (1-\xi) a_2 \xi^{-1} > 0$. More precisely, the sign depends on the restriction of $a_1$ and $a_2$, which are exhibited as follows: 
first, if $a_2 > a_1 - 1$, then we have $\frac{a_2}{a_2 - a_1 + 1} > 1$. Thus, $\xi < 1 < \frac{a_2}{a_2 - a_1 + 1}$, which yields $(a_1 - 1)+(1 - \xi) a_2 \xi^{-1} > 0$. By \eqref{eq: function G prime} and \eqref{eq: W_k second when W_k(xi) = 0}, we have $W_k''(\xi) < 0$, i.e., there exists a zero local maximum;  
% As a consequence, $W_k$ also has a negative local minimum. 
second, if $a_2 < a_1 - 1$, then we obtain $\frac{a_2}{a_2 - a_1 + 1} < 0$. 
Observe that $\frac{a_2}{a_2 - a_1 + 1} < 0 < \xi$, which further implies $(a_1 - 1) + (1-\xi) a_2 \xi^{-1} > 0$. 
If $a_2 = a_1 - 1$, then $(a_1 - 1)+(1 - \xi) a_2 \xi^{-1} = \frac{a_2}{\xi} > 0$. 
Therefore, we have $W_k''(\xi) < 0$, and a zero local maximum exists.

If $0 < a_1 \leq 1$, then by \eqref{eq: function G prime}, the convexity depends on the sign of $c_v + a_2 \xi^{-1}[p_0 - c_v(1-\xi)]$. 
Note that since it is assumed in Remark \ref{remark: parameter a_1 <= 1} that the unit production cost of virgin resources does not exceed the greatest retail price, namely, $p_0 \geq c_v$, it ensures a strictly positive $c_v + a_2 \xi^{-1}[p_0 - c_v(1-\xi)]$.
More precisely, we have $\frac{a_2(c_v - p_0)}{(a_2 + 1)c_v} \leq 0 < \xi$, which yields $c_v + a_2 \xi^{-1}[p_0 - c_v(1-\xi)] > 0$. Thus, $W_k''(\xi) < 0$, and there is a zero local maximum.

\begin{remark}
\label{remark: p_0 < c_v}
    As a complement to the argument in the case of $0 < a_1 \leq 1$, if we assume $p_0 < c_v$, then we obtain $0 < \frac{a_2(c_v - p_0)}{(a_2 + 1)c_v} < 1$. 
    In this case, the system manager ignores the fact that the profit per unit might be negative if the greatest retail price $p_0$ is quite low when taking dumping as a predatory pricing strategy. 
    We have the following cases:  
    First, if $\xi > \frac{a_2(c_v - p_0)}{(a_2 + 1)c_v}$, then we have $c_v + a_2 \xi^{-1}[p_0 - c_v(1-\xi)] > 0$, which further implies $W_k''(\xi) < 0$. Hence, there is a zero local maximum. 
    Second, if $\xi < \frac{a_2(c_v - p_0)}{(a_2 + 1)c_v}$, then we have $c_v + a_2 \xi^{-1}[p_0 - c_v(1-\xi)] < 0$, which indicates $W_k''(\xi) > 0$. Hence, there is zero local minimum. 
    Last but not least, if $\xi = \frac{a_2(c_v - p_0)}{(a_2 + 1)c_v}$, i.e., $c_v + a_2 \xi^{-1}[p_0 - c_v(1-\xi)] = 0$, it leads to an identically zero solution by uniqueness, which contradicts the non-zero initial condition. 
    For ease of exposition and to guarantee the practicality of our assumptions along with the general concrete environment, we omit the situation of other abnormal predatory pricing strategies for the rest of the discussion. 
\end{remark}

% -----------------------
% \textcolor{red}{To be deleted}
% If $0 < a_1 \leq 1$, then the convexity depends on the sign of $c_v + a_2 \xi^{-1}[p_0 - c_v(1-\xi)]$ by \eqref{eq: function G prime}. Since $\frac{a_2(c_v - p_0)}{(a_2 + 1)c_v} < 1$, a similar Case 1 does not occur. 
% We intend to preserve a similar discussion as in the previous Case 2 as follows: 
% first, if $\xi > \frac{a_2(c_v - p_0)}{(a_2 + 1)c_v}$, then we have $c_v + a_2 \xi^{-1}[p_0 - c_v(1-\xi)] > 0$, which further implies $W_k''(\xi) < 0$. Hence, there is a zero local maximum; 
% second, if $\xi < \frac{a_2(c_v - p_0)}{(a_2 + 1)c_v}$, then we have $c_v + a_2 \xi^{-1}[p_0 - c_v(1-\xi)] < 0$, which indicates $W_k''(\xi) > 0$. Hence, there is zero local minimum; 
% last but not least, if $\xi = \frac{a_2(c_v - p_0)}{(a_2 + 1)c_v}$, i.e., $c_v + a_2 \xi^{-1}[p_0 - c_v(1-\xi)] = 0$, it leads to a contradiction as in Case 2 due to uniqueness and non-zero initial condition. 

Observe that in the situations of $W_k(\xi)  > 0$ and $W_k(\xi) < 0$, it is straightforward to deduce similar results by following the same fashion. For brevity, we omit the detailed discussions.

These cases provide a general overview of the solution profile. We observe that when $a_1 > 1$, the solution does not admit any zero or negative local minima. A similar observation holds when $0 < a_1 \leq 1$, indicating the absence of negative oscillations. 
In the following lemma, we demonstrate that if the solution $W_k$ to \eqref{eq: W_k with integral}, associated with some $k \in \mathbb{R}$, intersects the $x$-axis, it cannot do so tangentially. In other words, $W_k$ and its derivative $W_k'$ cannot vanish simultaneously.

% Note that these cases provide a general picture of the solution profile. 
% % \textcolor{red}{
% We observe that when $a_1 > 1$, there cannot be any zero local minimum nor negative local minimum. When $0<a_1\leq 1$,  
% % and $p_0 \geq c_v$, 
% we have an identical observation. These imply that there is no negative oscillation. 
% % Moreover, when $0<a_1 \leq 1$ and $p_0 < c_v$, we observe that if a critical point appears to be greater than $\frac{a_2(c_v - p_0)}{(a_2 + 1)c_v}$, there cannot be any zero local minimum nor negative local minimum. If a critical point happens to be less than $\frac{a_2(c_v - p_0)}{(a_2 + 1)c_v}$, then there cannot be any zero local maximum nor positive local maximum. These observations indicate that there is no oscillation for the negative and positive parts of the solution accordingly, which may depend on the value of the critical point and $\frac{a_2(c_v - p_0)}{(a_2 + 1)c_v}$.
% % } 
% % In a nutshell, the positive critical points depend on the relationship between $a_1$ and $a_2$ as well as $\frac{a_2(c_v - p_0)}{(a_2 + 1)c_v}$. 
% In the next lemma, we demonstrate that whenever the solution $W_k$ to \eqref{eq: W_k with integral} associated with some $k\in \mathbb{R}$ achieves $x$ axis, it cannot reach $x$ axis tangentially. Loosely speaking, $W_k$ and $W_k'$ cannot vanish simultaneously. 

\begin{lemma}
    Assume  $W_k$ is the solution for the equation \eqref{eq: W_k with integral} with initial conditions $W_k(0)=C_L$ and $W_k'(0)=k\in\mathbb{R}$. 
    We define the notation $c_k:= \sup\{0 < x < 1: W_k(u) > 0 \text{ for all } 0 \leq u\leq x\}$ as the first time $W_k$ crosses $x$ axis. 
    If $\lim_{x\to c_k^-}W_k(x)=0$ for some $k$, we have $\lim_{x\to c_k^-}W_k'(x) < 0$. 
    \label{lemma: touch x axis tangentially}
\end{lemma}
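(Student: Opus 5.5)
The plan is to argue by contradiction, using the local analysis at a zero of $W_k$ given just before the lemma, namely \eqref{eq: W_k second when W_k(xi) = 0}, together with uniqueness for the initial value problem.

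\textbf{Step 1: the limits exist.} The solution $(Y_k, W_k)$ of the first-order system is continuous up to $c_k$. This holds because the right-hand side is locally Lipschitz in $(Y_k,W_k)$ away from $x=1$; since $\gamma>1$, the exponent $\frac{\gamma}{\gamma-1}>1$, so $F$ is $C^1$. Hence $W_k'(c_k^-)$ exists, and $W_k(c_k)=0$. On $[0,c_k)$ we have $W_k>0$, so the difference quotient gives $\lim_{x\to c_k^-}W_k'(x)\le 0$. It therefore suffices to rule out equality.

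\textbf{Step 2: rule out a tangential touch.} Suppose $W_k(c_k)=0$ and $W_k'(c_k)=0$. Evaluating \eqref{eq: W_k second order} at $\xi=c_k$, the terms involving $F(W_k)$, $F'(W_k)W_k'$, $\delta x W_k'$ and $(\alpha+\delta)W_k$ all vanish, which is exactly \eqref{eq: W_k second when W_k(xi) = 0}:
\[
\tfrac{\sigma^2}{2}W_k''(c_k) = -G'(c_k,a_1).
\]
By the case analysis preceding the lemma, $G'(\xi,a_1)>0$ for all $\xi\in(0,1)$. This uses $a_2>0$ when $a_1>1$, and the assumption $p_0\ge c_v$ when $0<a_1\le 1$. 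Hence $W_k''(c_k)<0$, so $c_k$ is a strict local maximum of $W_k$ with value $0$. This forces $W_k(x)<0$ for $x<c_k$ near $c_k$, contradicting $W_k>0$ on $[0,c_k)$. Therefore $W_k'(c_k^-)<0$.

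\textbf{Step 3: the endpoint $c_k=1$ (main obstacle).} Here $(1-x)^{\frac{1}{\gamma-1}}$ is still continuous, but in the case $a_1>1$ the term $G'$ contains $(1-r)^{-a_1}$ and blows up. So the second-order equation cannot be evaluated at $\xi=1$. I would work instead with the first-order form \eqref{eq: W_k with integral}, or directly with the second-order equation on $(1-\epsilon,1)$. Near $1$ we have $G'\to+\infty$, while the remaining terms stay bounded as long as $W_k$ and $W_k'$ are bounded. Then $W_k''\le -c$ for some $c>0$ on a left neighbourhood of $1$. Integrating once gives $W_k'(x)\ge W_k'(1^-)+c(1-x)$, and integrating again gives $W_k(x)<0$ near $1$ whenever $W_k'(1^-)=0$, the same contradiction.

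For $0<a_1\le1$, $G'$ is continuous at $1$ and Step 2 applies verbatim at $\xi=1$. If $G'$ is only known positive on $(0,1)$, a one-sided Taylor argument with $W_k''(1^-)<0$ suffices.

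The delicate point is ensuring that $W_k$ and $W_k'$ remain bounded up to $c_k$, so that the blow-up of $G'$ governs the sign of $W_k''$. I expect this to follow from boundedness of the solution on compact subintervals, combined with the singular term having a definite sign.
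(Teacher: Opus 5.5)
Your proposal is correct and is essentially the paper's argument. You assume $W_k'(c_k^-)=0$, read off $\frac{\sigma^2}{2}W_k''(c_k^-)=-G'(c_k,a_1)<0$ from \eqref{eq: W_k second order}, and conclude that $c_k$ would be a zero local maximum, forcing $W_k<0$ just left of $c_k$. Your separate handling of $c_k=1$ when $a_1>1$ is extra care the paper passes over: there $G'$, and in fact $G$ itself, blows up, so under the lemma's hypothesis the first-order form \eqref{eq: W_k with integral} already gives $W_k'\to-\infty$.
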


\begin{proof}
    % We exhibit our proof in two cases: $a_1 > 1$ and $0 < a_1 \leq 1$, due to the definition of \eqref{eq: function G}. 
    % First, we consider the situation of $a_1 > 1$. 
    We assume $\lim_{x\to c_k^-}W_r(x)=0$ for some $k$. Note that \eqref{eq: W_k with integral} guarantees $\lim_{x\to c_k^-}W_k'$ exists and 
    \begin{equation*}
    \begin{aligned}
        \lim_{x\to c_k^-}\frac{\sigma^2}{2}W_k'(x) 
        &= \alpha \int_0^{c_k} W_k(s)ds + \alpha K_k 
        % - (\gamma - 1) \lim_{x\to c_k^-}(1 - x)^{\frac{\gamma}{\gamma - 1}} (W_k)^{\frac{\gamma}{\gamma -1}} 
        % + \delta c_k \lim_{x\to c_k^-}W_k 
        - G(c_k, a_1). 
    \end{aligned}
    \end{equation*}
    Suppose $\lim_{x\to c_k^-}W_k'(x) = 0$, which indicates that $W_k$ reaches $x$ axis tangentially at point $x=c_k$. 

    Here, we consider two separate cases: $a_1 > 1$ and $0< a_1 \leq 1$, due to the definition of \eqref{eq: function G}. 
    If $a_1 > 1$, \eqref{eq: function G} suggests $G(x, a_1) = c(1 - x)^{1-a_1}x^{a_2}$.  
    When $a_2 < a_1 - 1$, we have $\frac{a_2}{a_2 - a_1 + 1} < 0 < c_k$, which suggests $(a_1 - 1) + (1 - c_k) a_2 c_k^{-1} > 0$. 
    % then we have $(a_1 - 1) + (1 - c_k) a_2 c_k^{-1} < 0$. This and in conjunction with \eqref{eq: function G} further imply that 
    % \begin{equation*}
    %     \begin{aligned}
    %         &\quad \lim_{\delta_0 \to 0} \frac{c(1 - c_k)^{1-a_1} c_k^{a_2} - c(1 - c_k + \delta_0)^{1-a_1} (c_k - \delta_0)^{a_2}}{W_k(c_k - \delta_0)} 
    %         \\
    %         &= \lim_{\delta_0 \to 0} \frac{-c_k^{a_2} (1 - c_k)^{-a_1} c \left((a_1 - 1) + (1 - c_k) a_2 c_k^{-1}\right)}{W_k'(c_k - \delta_0)}
    %         \to \infty, 
    %     \end{aligned}
    % \end{equation*}
    % as $\delta_0 \to 0$. Therefore, $\lim_{\delta_0 \to 0} \frac{W_r'(c_k-\delta_0)}{W_k(c_k-\delta_0)} = \infty$, i.e., for any $\Tilde{M} > 0$, there exists a $\Tilde{\delta}$ so that $\frac{W_r'(c_k-\delta_0)}{W_k(c_k-\delta_0)} > \Tilde{M}$ whenever $|\delta_0| < \Tilde{\delta}$. 
    % This implies that $W_k'(c_k - \delta_0) > \Tilde{M} W_k(c_k - \delta_0)$. 
    % Since $W_k'(c_k - \delta_0) < 0$ and $\Tilde{M} > 0$, we have $W_k(c_k - \delta_0) < 0$ which contradicts the definition of $c_k$. 
    % Therefore, $\lim_{x\to c_k^-}W_k'(x) < 0$ holds. 
    % Second, if $c_k < \frac{a_2}{a_2 - a_1 + 1}$, then we have $(a_1 - 1)+(1 - c_k) a_2 c_k^{-1} > 0$. 
    % Instead of the quotient, 
    We consider \eqref{eq: W_k second order}. 
    Taking the limit as $x\to c_k^-$ and by \eqref{eq: function G prime}, we have
    \[
    \lim_{x\to c_k^-} \frac{\sigma^2}{2} W_k''(x) = 
    -c(1 - c_k)^{-a_1} c_k^{a_2} \left[(a_1 - 1) + (1 - c_k) a_2 c_k^{-1}\right] < 0, 
    \]
    which implies that $W_k$ is concave at $c_k$, i.e., $W_k$ has a zero local maximum at $c_k$. This together with $\lim_{x\to c_k^-} W_k(x) = 0$ and $\lim_{x\to c_k^-} W_k'(x) = 0$, we observe that $W_k(c_k - \delta_0) < 0$ for some small $\delta_0$, which contradicts the definition of $c_k$. 
    % In addition, if $c_k = \frac{a_2}{a_2 - a_1 + 1}$, then we have $(a_1 - 1)+(1 - c_k) a_2 c_k^{-1} = 0$. By \eqref{eq: W_k second order}, we have $\lim_{x\to c_k^-} \frac{\sigma^2}{2} W_k''(x) = 0$, which further renders a contradiction by uniqueness and the initial condition since we cannot have an identically zero solution.  
    % The previous Case 1 renders a contradiction. 
    Hence, we have $\lim_{x\to c_k^-} W_k'(x) < 0$. 
    
    It is worth mentioning that all the above cases are discussed under the situation that $a_2 < a_1 - 1$ so that $\frac{a_2}{a_2 - a_1 + 1} < 0$. 
    In the case of $a_2 > a_1 - 1$, i.e., $\frac{a_2}{a_2 - a_1 + 1} > 1$, we have $c_k < 1 < \frac{a_2}{a_2 - a_1 + 1}$. 
    It is straightforward to deduce a contradiction by following a similar fashion. 
    Moreover, a contradiction occurs in the case of $a_2 = a_1 -1$ as well. 
    % , we have $(a_1 - 1)+(1 - c_k) a_2 c_k^{-1} = \frac{a_2}{c_k} >0$. 

    If $0 < a_1 \leq 1$, \eqref{eq: function G} implies $G(x, a_1) = [p_0-(1-x)c_v]a_0 p_0^{-a_1} x^{a_2}$. Taking the limit as $x\to c_k^-$ on both sides of \eqref{eq: W_k second order} to obtain
    \[
        \lim_{x\to c_k^-} \frac{\sigma^2}{2} W_k'' = - G'(c_k, a_1) = - a_0p_0^{-a_1}c_k^{a_2} \left[c_v + a_2 c_k^{-1} (p_0 - c_v(1-c_k))\right]. 
    \]
    Since we assumed $p_0 \geq c_v$, we have $\frac{a_1(c_v - p_0)}{(a_1 + 1)c_v} \leq 0 < c_k$, which in turn implies $c_v + a_2 c_k^{-1} (p_0 - c_v(1-c_k)) > 0$.  Therefore, $\lim_{x\to c_k^-} W_k'' < 0$ so that $W_k$ has a zero local maximum at $c_k$ and $W_k(c_k - \delta_0) < 0$. 
    % As in the previous discussion, we observe that $\frac{a_1(c_v - p_0)}{(a_1 + 1)c_v} < 1$. Hence, it suffices to compare $c_k$ and $\frac{a_1(c_v - p_0)}{(a_1 + 1)c_v}$, namely, the sign of $c_v + a_2 r^{-1} (p_0 - c_v(1-r))$, which in turn leads to the sign of second derivative at $c_k^-$. More precisely, if $c_k > \frac{a_2(c_v - p_0)}{(a_2 + 1)c_v}$, we have $c_v + a_2 r^{-1} (p_0 - c_v(1-r)) > 0$, which further implies $\lim_{x\to c_k^-}W_k'' < 0$. That is, $W_k$ has a zero local maximum at $c_k$ and $W_k(c_k - \delta_0) < 0$. 
    However, this contradicts the definition of $c_k$. 
    As a consequence, $\lim_{x\to c_k^-} W_k' < 0$. 
    This completes the proof. 
\end{proof}

We have characterized the solution profile of $W_k$ with respect to different values of the parameter $k$ and the independent variable $x$ in Lemma~\ref{lemma: comparison lemma} and Lemma~\ref{lemma: touch x axis tangentially}. Next, we aim to investigate the behavior of $W_k$ as $k$ becomes arbitrarily small or large. These results will further aid in visualizing the trajectories of the solutions $W_k$ for varying values of $k$.

\begin{lemma}
    Consider the sequence of solutions $\{W_k\}_{k \in \mathbb{R}}$ to \eqref{eq: W_k second order}. 
    There exists $k_1 < 0$ small enough such that for each $k < k_1$, we have $\lim_{x\to c_k^-}W_k(x) = 0$. 
    \label{lemma: small k}
\end{lemma}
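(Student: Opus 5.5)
We plan to show that for $k$ sufficiently negative, $W_k$ is forced to hit zero before $x=1$; equivalently $c_k<1$ and $\lim_{x\to c_k^-}W_k(x)=0$. We work with the integral form \eqref{eq: W_k with integral} and use the representation $K_k = \frac{1}{\alpha}\bigl(\frac{\sigma^2}{2}k + (\gamma-1)F(C_L)\bigr)$, so $K_k\to-\infty$ linearly as $k\to-\infty$.

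\textbf{Step 1 (a priori bound on the positive region).} On $[0,c_k)$ we have $W_k>0$. Suppose, for contradiction, that $W_k>0$ on all of $[0,1)$. Then $F(W_k)=W_k^{\gamma/(\gamma-1)}\ge 0$ and $\int_0^x W_k\ge 0$. Rearranging \eqref{eq: W_k with integral} gives
\[
\frac{\sigma^2}{2}W_k'(x) = \alpha\int_0^x W_k(s)\,ds + \alpha K_k - (\gamma-1)(1-x)^{\frac{\gamma}{\gamma-1}}F(W_k(x)) + \delta x W_k(x) - G(x,a_1).
\]
The terms $-(\gamma-1)(\cdots)F\le 0$ and $-G\le 0$ help us. The difficulty is the positive terms $\alpha\int_0^x W_k$ and $\delta x W_k$. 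To control them, we first bound $W_k$ from above. While $W_k\ge 0$, we get
\[
\tfrac{\sigma^2}{2}W_k' \le \alpha\int_0^x W_k + \delta W_k + \alpha K_k.
\]
Set $M(x)=\max_{[0,x]}W_k$. A Gronwall-type comparison with the linear ODE $\frac{\sigma^2}{2}z'=(\alpha+\delta)z+\alpha K_k$ (or with its integral version) started at $z(0)=C_L$ shows that $W_k(x)\le C_L e^{Ax}+ \alpha K_k B(x)$ for explicit constants $A$ and $B(x)>0$ on $(0,1]$. Here we use that the Volterra operator has a positive kernel, so comparison is monotone.

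\textbf{Step 2 (choice of $k_1$).} Since $K_k\to-\infty$, we can choose $k_1<0$ such that for $k<k_1$ the comparison bound becomes negative at some $x_0<1$, for instance already at $x_0=1/2$. This contradicts $W_k>0$ on $[0,1)$, so $c_k\le x_0<1$. Continuity of $W_k$, guaranteed by the local existence and uniqueness for the system in the appendix, then gives $W_k(c_k)=\lim_{x\to c_k^-}W_k(x)=0$. We must also exclude blow-up of $W_k$ before $c_k$. This follows because, on $[0,c_k)$, $W_k$ is bounded above by the comparison function and below by $0$, so the solution extends past $c_k$.

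\textbf{Main obstacle.} The hard step is making the comparison in Step 1 rigorous with the nonlocal term $\alpha\int_0^x W_k$ and the nonlinear $F$. We would handle it by discarding the nonpositive terms, which is valid only while $W_k\ge 0$, exactly the regime before $c_k$. We would then compare against the explicit solution of the linear second-order ODE $\frac{\sigma^2}{2}z''=(\alpha+\delta)z$ (obtained by differentiating), which has $z(0)=C_L$ and $z'(0)=k+O(1)$. Its explicit hyperbolic form $z(x)=C_L\cosh(\lambda x)+\frac{k+O(1)}{\lambda}\sinh(\lambda x)$, with $\lambda=\sqrt{2(\alpha+\delta)}/\sigma$, vanishes at some $x_0<1$ once $k$ is sufficiently negative. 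This yields an explicit $k_1$.
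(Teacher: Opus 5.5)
Your proof is correct, but it takes a different route from the paper.

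\textbf{The paper's route.} The paper argues locally at the origin. It uses $W_k'(0)=k<0$ to get $W_k\le C_L$ on a neighborhood $[0,\delta_0)$. It then integrates to obtain the affine bound $W_k(s)\le C_L+\frac{2}{\sigma^2}\bigl((\alpha+\delta)C_L\frac{s^2}{2}+\alpha K_k s\bigr)$ and lets $k\to-\infty$. When $0<a_1\le 1$ it carries an extra bounded $G$-term.

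\textbf{Your route.} You integrate \eqref{eq: W_k with integral} once. On the positive region you bound the Volterra kernel by $A=\frac{2(\alpha+\delta)}{\sigma^2}$, which uses $W_k\ge 0$ and $x\le 1$. Gronwall then gives $W_k(x)\le C_Le^{Ax}+\frac{2\alpha K_k}{\sigma^2A}\bigl(e^{Ax}-1\bigr)$ on all of $[0,c_k)$. Read in integral form, your comparison with $\frac{\sigma^2}{2}z'=(\alpha+\delta)z+\alpha K_k$ is exactly this.

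\textbf{What your version buys.}
\begin{itemize}
\item Your bound is uniform up to $c_k$. In the paper, the neighborhood $\delta_0$ implicitly depends on $k$, so letting $k\to-\infty$ at a fixed $s$ as written needs a bootstrap the paper omits. For example: once $\alpha K_k+(\alpha+\delta)C_L<0$, $W_k$ keeps decreasing while it lies in $(0,C_L]$, so it cannot climb back above $C_L$.
\item You get an explicit threshold $k_1$ and an explicit location $x_0<1$ for the zero.
\item You explicitly rule out blow-up before $c_k$.
\end{itemize}

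\textbf{Minor points.}
\begin{itemize}
\item Handling both regimes of $a_1$ at once via $G\ge 0$ is fine. For $0<a_1\le 1$, however, this relies on the paper's standing assumption $p_0\ge c_v$. Without it, the negative part of $G$ is bounded, so the conclusion is unchanged.
\item The second-order comparison in your last paragraph is unnecessary, and as phrased it is not justified. You cannot simply differentiate an inequality. Moreover, the linear equation generated by your kernel carries a first-order term (e.g.\ $\frac{\sigma^2}{2}z''=\delta z'+\alpha z$), so it is not $\frac{\sigma^2}{2}z''=(\alpha+\delta)z$. The integral-form Gronwall of Step 1 already closes the proof, so rely on that.
\end{itemize}
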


\begin{proof}
    % When $k=0$, \eqref{eq: W_k second order} at $r=0$ suggests that 
    % \[
    % \frac{\sigma^2}{2} W_0''(0) = (\alpha+ \delta) C_L + \gamma C_L^{\frac{\gamma}{\gamma - 1}} > 0,
    % \]
    % which implies that $W_0(\cdot)$ has a local minimum at the origin. 
    We intend to show that it is possible for $W_k$ to cross the $x$ axis for arbitrarily small $k$. To this end, we assume $W_k(x) > 0$ for all $x\in [0, 1]$ and small $k$.  When $x$ is in a small neighborhood of the origin such that $x\in [0, \delta_0)$ where $\delta_0 > 0$, we have $W_k'(0) = k < 0$ and $W_k(x) < W_k(0) = C_L$ since $W_k$ is strictly decreasing over the neighborhood of the origin. 
    Further, if $a_1 > 1$, \eqref{eq: W_k with integral} deduces
    \begin{equation*}
        \begin{aligned}
            \frac{\sigma^2}{2} W_k'(x) &= 
            -(\gamma - 1)(1 - x)^{\frac{\gamma}{\gamma - 1}} F(W_k(x)) 
            -G(x, a_1)
            + \delta x W_k(x) + \alpha \int_0^x W_k(u) du + \alpha K_k
            \\
            % &= 
            % -(\gamma - 1)(1 - x)^{\frac{\gamma}{\gamma - 1}} F(W_k(x)) 
            % - c(1 - x)^{1-a_1} x^{a_2}
            % + \delta x W_k(x) + \alpha \int_0^x W_k(u) du + \alpha K_k
            % \\
            &\leq \delta C_L x  + \alpha C_L x + \alpha K_k - G(x, a_1) \mathbbm{1}_{[0 < a_1 \leq 1]},  
        \end{aligned}
    \end{equation*}
    since the non-negativity of $W_k$ and $x\in[0, \delta_0)$. 
    Note that the last term depends on the restriction of $a_1$. If $a_1 > 1$, by \eqref{eq: function G}, we know $G(x, a_1) \geq 0$, and we may eliminate this term as an upper bound. 
    If $0 < a_1 \leq 1$, the last term may be preserved, and it depends on its sign. 
    More precisely, it depends on the following cases: on the one hand, if $x< 1 - \frac{p_0}{c_v}$, then we have $p_0 - (1 - x)c_v < 0$. This together with \eqref{eq: function G} suggests $G(x, a_1) < 0$; On the other hand, if $x \geq 1 - \frac{p_0}{c_v}$, then we have $p_0 - (1 - x)c_v \geq 0$, which implies $G(x, a_1) \geq 0$.  
    For brevity, we postpone the detailed discussion until the end of this proof. 

    We intend to tackle these cases separately. 
    If $a_1 > 1$, we discard the last term of the above inequality. 
    Integrating both sides over $[0, s]$, we obtain
    \begin{equation*}
            W_k(s) \leq C_L + \frac{2}{\sigma^2} \left( \delta C_L \frac{s^2}{2} + \alpha C_L \frac{s^2}{2} + \alpha K_k s \right), 
    \end{equation*}
    where $K_k = \frac{1}{\alpha} (\frac{\sigma^2}{2} k + (\gamma - 1)C_L^{\frac{\gamma}{\gamma - 1}})$. 
    Since $s\in[0, \delta_0)$ and $C_L > 0$ are finite, we have $\lim_{k \to -\infty} W_k(s) = -\infty$, which contradicts the assumption of non-negative $W_k$. 

    If $0 < a_1 \leq 1$, we rewrite the inequality as
    \begin{equation*}
        \begin{aligned}
            \frac{\sigma^2}{2} W_k' 
            % &=
            % -(\gamma - 1)(1 - x)^{\frac{\gamma}{\gamma - 1}} F(W_k(x)) 
            % - [p_0-(1-x)c_v]a_0 p_0^{-a_1}x^{a_2}
            % + \delta x W_k(x) + \alpha \int_0^x W_k(u) du + \alpha K_k
            % \\
            &\leq \delta C_L x  + \alpha C_L x + \alpha K_k - G(x, a_1), \\
            &\leq \delta C_L x  + \alpha C_L x + \alpha K_k - [p_0-(1-x)c_v]a_0 p_0^{-a_1}x^{a_2},
            \\
            &\leq \delta C_L x  + \alpha C_L x + \alpha K_k + c_v a_0 p^{-a_1} x^{a_2},  
        \end{aligned}
    \end{equation*}
   since $a_0 p_0^{1-a_1} x^{a_2} \geq 0$ and $p_0^{-a_1}x^{a_2 + 1} \geq 0$. 
    Integrating both sides of the inequality over $[0, s]$ to obtain
    \begin{equation*}
    % \begin{aligned}
        W_k(s) 
        % &\leq C_L + \frac{2}{\sigma^2} \left[\delta C_L \frac{s^2}{2} + \alpha C_L \frac{s^2}{2} + \alpha K_k s + c_v a_0 p^{-a_1} \left( \frac{1}{a_2 + 1}s^{a_2 + 1} - \frac{1}{a_2 + 2} s^{a_2 + 2}\right)\right] 
        % \\
        \leq C_L + \frac{2}{\sigma^2} \left(\delta C_L \frac{s^2}{2} + \alpha C_L \frac{s^2}{2} + \alpha K_k s + c_v a_0 p^{-a_1}  \frac{1}{a_2 + 1}s^{a_2 + 1}\right). 
    % \end{aligned}
    \end{equation*}
    If we let $k \to-\infty$, one observe $\lim_{k\rightarrow -\infty} W_k(s) = -\infty$, which is also a contradiction. 
    Consequently, there exists $k_1 < 0$ small such that whenever $k < k_1$, $W_k(x) = 0$ for some $x$, i.e., it is possible for the solution to cross the $x$ axis. This completes the proof. 
\end{proof}

\begin{lemma}
    Consider the sequence of solutions $\{W_k\}_{k \in \mathbb{R}}$ to \eqref{eq: W_k second order}. 
    There exists $k_2 > 0$ large enough such that for each $k > k_2$, we have $W_k(x) >0$, and it has a local maximum. 
    \label{lemma: large k}
\end{lemma}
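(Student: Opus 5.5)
The idea is to mirror Lemma~\ref{lemma: small k}, but with a lower bound on $W_k'$ coming from \eqref{eq: W_k with integral}. Suppose first that $W_k$ stays positive on some interval $[0,x]$. On that interval we use the representation
\[
\frac{\sigma^2}{2} W_k'(x) = \alpha \int_0^x W_k(s)ds + \alpha K_k + \delta x W_k(x) - (\gamma-1)(1-x)^{\frac{\gamma}{\gamma-1}}F(W_k(x)) - G(x,a_1),
\]
with $K_k = \frac{1}{\alpha}\bigl(\frac{\sigma^2}{2}k + (\gamma-1)F(C_L)\bigr)$, which grows linearly in $k$.

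\textbf{Bounding the right-hand side.} The term $\alpha K_k$ dominates once $k$ is large. Positivity of $W_k$ makes $\alpha\int_0^x W_k$ and $\delta x W_k$ nonnegative. The function $G(\cdot,a_1)$ is bounded on $[0,1-\epsilon]$ for each fixed $\epsilon>0$, in both regimes of $a_1$. When $a_1>1$ the factor $(1-x)^{1-a_1}$ blows up at $x=1$, so we work on $[0,1-\epsilon]$ and let $\epsilon$ be small.

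The difficult term is $-(\gamma-1)(1-x)^{\gamma/(\gamma-1)}W_k^{\gamma/(\gamma-1)}$, which is superlinear in $W_k$. To control it, run a continuation (bootstrap) argument on the set where $W_k \le C_L + Ak$, for a suitable constant $A$. On that set the nonlinear term is at most of order $k^{\gamma/(\gamma-1)}$, and this is \emph{not} dominated by $\alpha K_k \sim k$ when $\gamma>1$. This is the main obstacle.

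\textbf{Handling the superlinear term.} I would exploit the structure of the equation rather than bound crudely. Note the first-order identity at $x=0$:
\[
\alpha K_k = \frac{\sigma^2}{2}k + (\gamma-1)F(C_L),
\]
so $W_k'(0)=k>0$ and $W_k$ initially increases. Then compare $W_k$ with the solution of the Riccati-type ODE
\[
\frac{\sigma^2}{2}w' = \alpha K_k - (\gamma-1)w^{\gamma/(\gamma-1)} - \sup G .
\]
This ODE has a stable equilibrium $w_k^* \approx (\alpha K_k/(\gamma-1))^{(\gamma-1)/\gamma}$, which tends to $\infty$ as $k\to\infty$. The comparison shows that $W_k$ rises quickly toward a level of order $w_k^*$ and stays bounded below by $\min(C_L, w_k^*/2) > 0$ on $[0,1-\epsilon]$. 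Positivity near $x=1$ then follows by continuity, choosing $\epsilon$ depending on $k$. Lemma~\ref{lemma: comparison lemma} gives monotonicity in $k$, so once positivity holds for some $k_2$ it holds for all $k>k_2$.

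\textbf{Local maximum.} Since $W_k'(0)=k>0$, $W_k$ initially increases. It must eventually turn around, because the boundary condition $W_k(1)=0$ (or the damping term $-(\gamma-1)(1-x)^{\gamma/(\gamma-1)}F(W_k)$ becoming dominant once $W_k$ exceeds $w_k^*$) forces $W_k'$ to become negative. By the intermediate value theorem $W_k'$ therefore vanishes at some interior $\xi$. At that point \eqref{eq: ode with w'(xi) = 0} gives
\[
\frac{\sigma^2}{2}W_k''(\xi) = (\alpha+\delta)W_k(\xi) + \gamma(1-\xi)^{\frac{1}{\gamma-1}}F(W_k(\xi)) - G'(\xi,a_1).
\]
For large $k$ the value $W_k(\xi)$ is large, so the right-hand side is strictly positive. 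That would make $\xi$ a local \emph{minimum}, so the sign needs care. I would instead identify the first critical point $\xi$ after the initial increase: because $W_k$ was increasing on $[0,\xi)$, it is a local maximum by definition, and the second-derivative sign is not needed.
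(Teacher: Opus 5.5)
You handle the superlinear damping correctly with the Riccati comparison, and more carefully than the paper, whose contradiction arguments send $k\to\infty$ while treating the $k$-dependent bounds $M_l$, $M_u$ as fixed. While $W_k>0$ you have $\frac{\sigma^2}{2}W_k'\ge\alpha K_k-\sup G-(\gamma-1)F(W_k)$. Since $\alpha K_k=\frac{\sigma^2}{2}k+(\gamma-1)F(C_L)$, the comparison solution starts at $C_L$ below its equilibrium, so $W_k\ge C_L$ as soon as $\frac{\sigma^2}{2}k>\sup G$. For $0<a_1\le 1$ this settles positivity on all of $[0,1]$, because there $0\le G\le a_0p_0^{1-a_1}$.

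For $a_1>1$, however, ``positivity near $x=1$ then follows by continuity'' is a genuine gap. The comparison needs $\sup_{[0,1-\epsilon]}G\approx c\,\epsilon^{1-a_1}$ to be dominated by $\frac{\sigma^2}{2}k$. So for fixed $k$ it only reaches $1-\epsilon(k)$ with $\epsilon(k)>0$, and continuity there says nothing on an interval of definite length. What is needed is an integral estimate. On $[1-\epsilon(k),1)$, while $0<W_k\le C_L$, the damping term is at most $(\gamma-1)F(C_L)\le\alpha K_k$, so $\frac{\sigma^2}{2}W_k'\ge-G$. After its last passage through the level $C_L$, the solution can therefore drop by at most $\frac{2}{\sigma^2}\int_{1-\epsilon(k)}^1G\,dx$. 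This tends to $0$ as $k\to\infty$ only if $G$ is integrable at $x=1$, i.e.\ $a_1<2$. For $a_1\ge2$ the claim itself fails. Indeed $\frac{\sigma^2}{2}W_k'\le\alpha K_k+(\alpha+\delta)\max_{[0,x]}W_k^+-G$, Gronwall bounds the maximum, and $\int_{1-\epsilon}^1G\,dx=\infty$ drives $W_k\to-\infty$ as $x\to1^-$ for every $k$.

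The local-maximum step fails as written, for two reasons.
\begin{itemize}
\item $W_k(1)=0$ is the shooting condition that singles out $k^*$, not a property of the initial-value solution $W_k$. For large $k$ it is even incompatible with the positivity you are proving.
\item Exceeding $w_k^*$ does not make the drift negative. You computed $w_k^*$ after replacing $(1-x)^{\gamma/(\gamma-1)}$ by $1$ and discarding $\alpha\int_0^xW_k+\delta xW_k$, and the true balancing level increases with $x$.
\end{itemize}

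Your sign computation is the actual obstruction, not a detail to sidestep. At a critical point $\xi$ with $W_k>0$ on $[0,\xi]$, \eqref{eq: W_k with integral} gives $(\gamma-1)(1-\xi)^{\frac{\gamma}{\gamma-1}}F(W_k(\xi))\ge\alpha K_k-G(\xi,a_1)$. Insert this into your formula, using $\gamma(1-\xi)^{\frac{1}{\gamma-1}}=\frac{\gamma}{1-\xi}(1-\xi)^{\frac{\gamma}{\gamma-1}}$. Whenever $\alpha K_k\ge G(\xi,a_1)$ this yields
\[
\frac{\sigma^2}{2}W_k''(\xi)\ \ge\ \frac{\gamma}{\gamma-1}\bigl(\alpha K_k-G(\xi,a_1)\bigr)-G'(\xi,a_1),
\]
which is positive for large $k$ wherever $G$ and $G'$ are bounded. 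A first critical point after an increase must have $W_k''(\xi)\le0$. So declaring it a local maximum ``by definition'' presupposes a point that this inequality excludes.

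Consequences by regime:
\begin{itemize}
\item For $0<a_1\le1$ (with, e.g., $a_2\ge1$ so that $G'$ is bounded), $W_k$ is strictly increasing on $[0,1]$ for all large $k$ and has no interior local maximum at all.
\item For $1<a_1<2$ a turning point does exist, but for a reason your proposal never uses. $G(x,a_1)\to\infty$ as $x\to1^-$ while every other term of \eqref{eq: W_k with integral} stays bounded, so $W_k'(x)\to-\infty$. Hence the maximum of $W_k$ over $[0,1]$ is attained at an interior point.
\end{itemize}

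The paper's own route, $W_k''(0)\to-\infty$, does not supply the local maximum either, since $W_k'(0)=k$ grows at the same linear rate.
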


\begin{proof}
    Since $W_k(0) = C_L$ and $W_k'(0) = k > 0$, we know that $W_k$ is strictly increasing and $W_k(x)\geq C_L$ for $x$ in the neighborhood of the origin. 
    By \eqref{eq: function G prime} and Remark \ref{remark: parameter a_1 <= 1}, we observe that $G'(\cdot, a_1)\geq 0$. 
    % \textcolor{red}{Here, it is straightforward to justify when $a_1 > 1$. When $0 < a_1 \leq 1$, since it is assumed that the unit production cost of virgin resources does not exceed the greatest retail price, namely, $p_0 \geq c_v$, we can easily deduce $G'(\cdot, a_1) \geq 0$. }
    % \textcolor{green}{(how about $p_0 < c_v$? 06/29)}
    Therefore, we may derive an inequality in the neighborhood of the origin as follows: 
    \begin{equation*}
        \frac{\sigma^2}{2} W_k'' \leq \gamma(1 - x)^{\frac{1}{\gamma - 1}}F(W_k) - (\gamma - 1)(1 - x)^{\frac{\gamma}{\gamma - 1}}F'(W_k) W_k'
        + \delta x W_k' + (\alpha + \delta) W_k. 
    \end{equation*}
    If we evaluate both sides of the inequality at $x=0$, we obtain 
    \[
    \frac{\sigma^2}{2}W_k''(0) \leq \gamma C_L^{\frac{\gamma}{\gamma - 1}} - \gamma k C_L^{\frac{1}{\gamma - 1}} + (\alpha + \delta) C_L, 
    \]
    which further yields $\lim_{k\to\infty} W_k''(0) = -\infty < 0$ , which implies a local maximum at the origin. 

    % Next, we intend to exhibit the behavior around the endpoint $x=1$. 
    % Suppose we have $W_k(x) \leq 0$ for $x\in U := \{x\in [0, 1]: 1 - \delta_0 < x < 1\}$ and for large $k$, where $\delta_0 > 0$ is arbitrary. 
    % Note that since $W_k$ is continuous on a closed interval $[0, 1]$, we can find a lower bound, namely, $M_l \leq W_k$, where $M_l < 0$. 
    % Then, \eqref{eq: W_k with integral} implies that for $x\in U$, 
    % \begin{equation*}
    % \begin{aligned}
    %     \frac{\sigma^2}{2} W_k' &= \delta x W_k - G(x, a_1) + \alpha \int_0^x W_k + \alpha K_k \\
    %     &\geq \delta (1-\delta_0) M_l - G(x, a_1) + \alpha M_l (1 - \delta_0) + \alpha K_k. 
    % \end{aligned}
    % \end{equation*}
    % By letting $k\to\infty$ on both sides, we have $\lim_{k\to\infty}W_k'(x) = \infty > 0$ for $x\in U$. 
    % One can find an $\eta\in(0, 1)$ such that $W_k(\eta) < 0$ and $W_k'(\eta) = 0$. 
    % Consider \eqref{eq: W_k with integral} at $x=\eta$. 
    % We observe 
    % \begin{equation*}
    % \begin{aligned}  
    %     \frac{\sigma^2}{2}W_k'(\eta) &= \delta \eta W_k(\eta) - G(\eta, a_1) + \alpha \int_0^\eta W_k + \alpha K_k 
    %     \\
    %     &\geq \delta \eta M_l - G(\eta, a_1) + \alpha \eta M_l + \alpha K_k. 
    % \end{aligned}
    % \end{equation*}
    % Let $k\to\infty$, we obtain $\lim_{k\to\infty} W_k'(\eta) = +\infty > 0$, which is a contradiction. Therefore, such a negative local minimum does not exist for large $k$. 
    % As a consequence, $\lim_{k\to\infty}W_k(x) > 0$ for $x\in U$. 

    % \textcolor{red}{
    Next, we intend to exhibit $W_k > 0$ for arbitrary large $k>0$. 
    This amounts to show that $\lim_{k\to\infty}c_k > 1$. 
    Firstly we assume $\lim_{k\to\infty}c_k < 1$ and hence, there is an $\eta\in(c_k, 1]$ so that $W_k(x) \leq 0$ for $x\in(c_k, \eta]$ and large $k$. 
    Note that since $W_k$ is continuous on a closed interval $[0, 1]$, we have $\|W_k\|_{T} \leq M_u$, where $T=1$ and $M_u > 0$ is a constant. In particular, we can find a lower bound, namely, $M_l \leq W_k$, where $M_l < 0$. 
    Then, \eqref{eq: W_k with integral} for $x\in [c_k, \eta]$ suggests
    \begin{equation*}
    \begin{aligned}
        \frac{\sigma^2}{2}W_k'(x) &= \delta x W_k(x) - G(x, a_1) + \alpha \int_0^x W_k + \alpha K_k 
        \\
        &\geq \delta x M_l - G(x, a_1) + \alpha x M_l + \alpha K_k. 
    \end{aligned}
    \end{equation*}
    If we integrating both sides of the inequality over $(c_k, \eta)$, in the case of $a_1 > 1$ one can deduce
    \begin{equation*}
        \begin{aligned}
            W_k(\eta) &\geq \frac{2}{\sigma^2} \left[ \frac{\delta}{2} M_l(\eta^2 - c_k^2)
            - \int_{c_k}^\eta G(x, a_1)dx 
            + \frac{\alpha}{2} M_l(\eta^2 - c_k^2) 
            + \alpha K_k (\eta - c_k) 
            \right]
            \\
            &\geq \frac{2}{\sigma^2} \left[ \frac{\delta}{2} M_l(\eta^2 - c_k^2)
            +\frac{c}{2-a_1} \left((1-\eta)^{2-a_1} - (1-c_k)^{2-a_1}\right)
            + \frac{\alpha}{2} M_l(\eta^2 - c_k^2)
            + \alpha K_k (\eta - c_k)
            \right], 
        \end{aligned}
    \end{equation*}
    where the last inequality is obtained by employing $(1-x)^{1-a_1}x^{a_2} \leq (1-x)^{1-a_1}$ and compute the integration. 
    In the case of $0 < a_1 \leq 1$, we have 
    \begin{equation*}
        \begin{aligned}
            W_k(\eta) 
            % &\geq \frac{2}{\sigma^2} \left[ \delta M_l (s - c_k) 
            % - \int_{c_k}^s G(\eta, a_1)d\eta 
            % + \alpha M_l(s - c_k) 
            % + \alpha K_k (s - c_k) 
            % \right]
            % \\
            &\geq \frac{2}{\sigma^2} \left[ \frac{\delta}{2} M_l(\eta^2 - c_k^2)
            - \int_{c_k}^\eta \left(p_0 - c_v + c_v x\right) a_0 p_0^{-a_1} x^{a_2}dx
            + \frac{\alpha}{2} M_l(\eta^2 - c_k^2) 
            + \alpha K_k (\eta - c_k)
            \right]
            \\
            &\geq \frac{2}{\sigma^2} \bigg[ \frac{\delta}{2} M_l(\eta^2 - c_k^2)
            - \left(\frac{a_0p_0^{1-a_1}}{a_2 + 1} (s^{a_2+1} - c_k^{a_2+1}) + \frac{a_0 c_v p_0^{-a_1}}{a_2 + 2}(s^{a_2 + 2} - c_k^{a_2 + 2}) \right) \\
            &\quad \quad\quad + \frac{\alpha}{2} M_l(\eta^2 - c_k^2) 
            + \alpha K_k (\eta - c_k)
            \bigg]. 
        \end{aligned}
    \end{equation*}
    Let $k\to\infty$. We observe in both cases that $\lim_{k\to\infty}W_k(\eta) = \infty > 0$, which is a contradiction. 

    It is left to show $\lim_{k\to\infty}c_k \neq 1$. We intend to show this by contradiction. 
    Assume $\lim_{k\to\infty}c_k = 1$ and hence, $W_k(1) = 0$ and $W_k'(1) < 0$ by Lemma \ref{lemma: touch x axis tangentially}. 
    If $0 < a_1 \leq 1$, by \eqref{eq: function G} and \eqref{eq: W_k with integral}, we have
    \[
    \frac{\sigma^2}{2}W_k'(1) \geq - a_0 p_0^{1-a_1} + \alpha K_k. 
    \]
    Let $k\to\infty$, we have $\lim_{k\to\infty}W_k'(1) = \infty > 0$, which is a contradiction. 
    If $a_1 > 1$, we consider the behavior of $W_k$ in a neighborhood of the endpoint, i.e., $U = \{x \in[0, 1]: 1 - \delta_0 < x < 1\}$. 
    The behavior at the endpoint together with $W_k(0) = C_L > 0$ and $W_k'(0) = k > 0$ indicates that we can find an $\eta \in U$ so that $W_k(\eta) > 0$ and $W_k'(\eta) < 0$. 
    By \eqref{eq: W_k with integral} and the boundedness, we have the following inequality: 
    \[
    \frac{\sigma^2}{2} W_k'(\eta) \geq 
    -(\gamma - 1) (1 - \eta)^{\frac{\gamma}{\gamma - 1}} M_u^{\frac{\gamma}{\gamma - 1}}
    -c(1-\eta)^{1-a_1} \eta^{a_2} + \alpha K_k, 
    \]
    where the constant $M_u > 0$ is an upper bound of $W_k$ on the interval $[0, 1]$. 
    Now let $k\to\infty$, it is evident that $\lim_{k\to\infty} W_k'(\eta) =\infty > 0$, which contradicts our assumption. 
    Consequently, we have $\lim_{k\to\infty} c_k > 1$, and hence, $\lim_{k\to\infty}W_k(\cdot) > 0$. 
    % }

    Additionally, since $W_k(x)$ is continuous in $x$ and initial condition $k$ (see Chapter V in \cite{hartman2002ordinary} or Chapter 2 in \cite{teschl2012ordinary}), and if $W_k'(0) = k>0$, we have $W_k(\cdot)$ is strictly increasing in an interval $[0, \delta_k)$, where $\delta_k > 0$ is small and depends on $k$. 
    As a consequence, we can find $k_2 > 0$ large such that $W_{k_2}$ has a local maximum. 
    By comparison lemma (Lemma \ref{lemma: comparison lemma}), there exists $k_2>0$ large such that for each $k>k_2$, $W_k > 0$ and it has a local maximum. 
    This completes the proof. 
\end{proof}

Based on Lemmas~\ref{lemma: comparison lemma}-\ref{lemma: large k}, we obtain a comprehensive understanding of the solution profile of the second-order differential equation~\eqref{eq: W_k second order}. In Figure~\ref{fig: W_k with different k values}, we present sample graphs of $W_k$ corresponding to various values of $k$, where the parameters $k_i$ are ordered increasingly for $i = 0, 1, \cdots, 6$, and satisfy $k_2 < k^* < k_4$. Here, $k^*$ denotes the optimal value such that $W_{k^*}$ is the optimal solution to Equation~\eqref{eq: W_k second order}.
We assume the following parameter values: initial condition $C_L = 0.5$, $\sigma^2 = 2$, $\gamma = 5$, $a_1 = 1.1$, $a_2 = 5$, $\delta = 0.5$, and $c_v = 0.2$. For the performance measure defined in \eqref{equwithbm}, we set the discount parameter $\alpha = \frac{1}{4}$.

\begin{figure}[h!]
    \centering
    \includegraphics[scale=0.7]{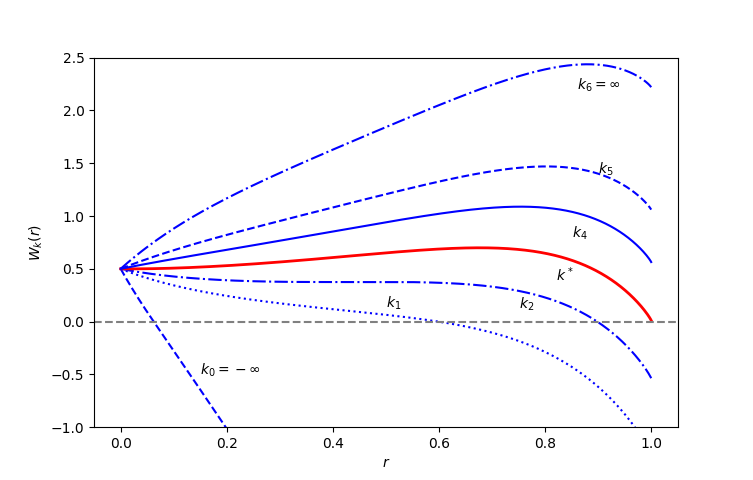}
    \caption{The sample solutions $W_k$ with different values of $k$.}
    \label{fig: W_k with different k values}
\end{figure}

\begin{proposition}
    There exists a $k^*$ such that the corresponding function $Y_{k^*}(x)$  is increasing on $[0, 1]$, and it has a non-negative bounded first derivative such that $0 \leq Y_{k^*}'(x) \leq M$ for all $x\in[0, 1]$ where $M > 0$ large enough. Moreover, we have $Y_{k^*}'(1) = 0$. 
    \label{prop: solution profile for Y_k^*}
\end{proposition}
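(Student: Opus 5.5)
We will use a shooting argument in the parameter $k$, built on the monotone dependence of $W_k = Y_k'$ on $k$ from Lemma~\ref{lemma: comparison lemma} and the two extreme regimes in Lemmas~\ref{lemma: small k} and~\ref{lemma: large k}. Define
\[
A := \{k\in\mathbb{R} : W_k \text{ vanishes somewhere in } (0,1],\ \text{i.e. } c_k \le 1 \text{ with } \lim_{x\to c_k^-}W_k(x)=0\}, \qquad B := \{k\in\mathbb{R}: W_k(x)>0 \text{ for all } x\in[0,1]\}.
\]
By Lemma~\ref{lemma: small k}, $A$ contains $(-\infty,k_1)$. By Lemma~\ref{lemma: large k}, $B$ contains $(k_2,\infty)$. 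By the comparison lemma, if $k\in B$ and $k'>k$ then $W_{k'}\ge W_k>0$, so $k'\in B$; hence $B$ is an upper ray and $A$ a lower ray. We set $k^* := \inf B = \sup A$, which is finite.

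Next we show that $W_{k^*}\ge 0$ on $[0,1]$ and that $W_{k^*}(1)=0$. We use continuous dependence of $(Y_k,W_k)$ on $k$, uniformly on $[0,1]$. This is available because the vector field is locally Lipschitz where $W>0$. The field may fail to be Lipschitz in $W$ near $W=0$ when $1<\gamma<2$, since $F'(0)$ blows up; in that case we invoke Peano-type continuity of the solution set, or restrict to $\gamma\ge 2$ first.
\begin{enumerate}[(a)]
\item \textbf{Nonnegativity.} Take $k\downarrow k^*$ with $k\in B$. Then $W_k>0$ and $W_k\to W_{k^*}$ uniformly, so $W_{k^*}\ge 0$.
\item \textbf{Zero at the right endpoint.} If $W_{k^*}>0$ on all of $[0,1]$, compactness gives $\min W_{k^*}>0$. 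Continuity then puts a neighbourhood of $k^*$ inside $B$, contradicting $k^*=\inf B$. So $W_{k^*}$ vanishes somewhere in $(0,1]$.
\item \textbf{The zero is only at $x=1$.} Lemma~\ref{lemma: touch x axis tangentially} says any first zero $c_{k^*}$ is crossed transversally, with $W_{k^*}'(c_{k^*})<0$. If $c_{k^*}<1$, $W_{k^*}$ would take negative values just after $c_{k^*}$, contradicting (a). Therefore the only zero is at $x=1$, so $Y_{k^*}'(1)=W_{k^*}(1)=0$.
\end{enumerate}
Since a transversal crossing at $x=1$ is harmless, no contradiction arises there.

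The remaining claims follow quickly. $Y_{k^*}'=W_{k^*}\ge 0$ on $[0,1]$, so $Y_{k^*}$ is increasing. $W_{k^*}$ is continuous on the compact interval $[0,1]$, so it is bounded by some $M$. $Y_{k^*}(x)=\int_0^x W_{k^*}+K_{k^*}$ is also bounded. Also $Y_{k^*}'(0)=C_L$ holds by construction.

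The main obstacle is step (c) together with the continuity claim. Lemma~\ref{lemma: touch x axis tangentially} gives a strict sign only for $c_k<1$, so we must check that the limit $W_{k^*}$ cannot touch zero tangentially in the interior. This is exactly what Lemma~\ref{lemma: touch x axis tangentially} excludes, provided the solution exists up to $c_{k^*}$; we secure that existence through the a priori bound on $W_k$ for $k$ in a compact set, using the integral form \eqref{eq: W_k with integral}. A second point needs care: the sets $A$ and $B$ exhaust $\mathbb{R}$ only if solutions do not blow up before $x=1$. We handle this by noting that \eqref{eq: W_k with integral} has at most $W^{\gamma/(\gamma-1)}$ growth with the coefficient $(1-x)^{\gamma/(\gamma-1)}$ entering with a favourable (negative) sign in $W'$, so positive solutions cannot blow up upward. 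Downward blow-up only occurs after a zero, by which point $k$ is already in $A$.
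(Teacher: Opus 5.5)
Your argument is correct given the paper's lemmas. It reaches the conclusion by a somewhat different mechanism than the paper does.

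\textbf{The paper's route.} The paper sets $k^*=\inf\{k: W_k \text{ has a local maximum and } W_k(1)\ge 0\}$. It first obtains $W_{k^*}(1)=0$ from continuity in $k$. It then rules out an interior first zero $c_{k^*}<1$ by an ODE argument. After the transversal crossing given by Lemma~\ref{lemma: touch x axis tangentially}, getting back to $W_{k^*}(1)=0$ would require a negative local minimum $\eta$. But \eqref{eq: W_k second order} gives $\frac{\sigma^2}{2}W_{k^*}''(\eta)=-G'(\eta,a_1)+(\alpha+\delta)W_{k^*}(\eta)<0$ there.

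\textbf{Your route.} You shoot on the open upper ray $B$ of strictly positive solutions. Then $W_{k^*}\ge 0$ comes for free as a uniform limit from above, openness of $B$ forces a zero, and transversality alone pushes that zero to $x=1$. So no ``no negative local minimum'' step is needed. Your shooting set also sidesteps the paper's ``has a local maximum'' clause, which its continuity step never verifies for $k$ slightly below $k^*$. In exchange, the paper's route yields extra structure that the statement does not need: a solution that has gone negative can never turn back up, and $W_{k^*}$ is concave near $x=1$.

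\textbf{Correction to your regularity discussion.} $F'(x)=\frac{\gamma}{\gamma-1}x^{1/(\gamma-1)}$ has a positive exponent. Hence $F\in C^1(\mathbb{R})$ with $F'(0)=0$ for every $\gamma>1$, and the right-hand side is locally Lipschitz in $(Y,W)$ everywhere. Neither Peano nor a restriction to $\gamma\ge 2$ is needed, and the latter would not prove the statement anyway.

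\textbf{Where the real singularity lies.} The genuine non-smoothness is in $x$: for $a_1>1$, $G(x,a_1)=c(1-x)^{1-a_1}x^{a_2}$ is unbounded at $x=1$. Since $G$ does not depend on $(Y,W,k)$, it cancels in the Gronwall estimate for $W_k-W_{k'}$. So continuity in $k$ holds uniformly on $[0,1)$, and that is how your continuity claim should be justified, not via $W>0$. However, steps (a)--(c) and the bound by $M$ also use a finite endpoint value $W_k(1)$. That requires $\int_{1/2}^1 G(x,a_1)\,dx<\infty$, i.e.\ $a_1<2$. For $a_1\ge 2$, every $W_k\to-\infty$ as $x\to 1^-$, so $B=\emptyset$ and Lemma~\ref{lemma: large k} fails. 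This defect is inherited from the paper's setting and affects its proof equally, so it is not a gap you introduced.
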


\begin{proof}
    We introduce
    \begin{equation}
        k^* = \inf\{k\in \mathbb{R}: W_k \text{ has a local maximum and } W_k(1) \geq 0 \},
        \label{eq: k^*}
    \end{equation}
    which is well defined since the set $\{k\in\mathbb{R}: W_k \text{ has a local maximum and } W_k(1) \geq 0 \}$ is nonempty and $k^*$ is finite according to Lemma \ref{lemma: small k} and \ref{lemma: large k}. 
    First, we show that $Y_{k^*}'(1)=W_{k^*}(1) = 0$. 
    We intend to exhibit this by contradiction. 
    Suppose we have $W_{k^*}(1) > 0$. By the continuity of the $W_k$ with respect to $k$ and comparison lemma (Lemma \ref{lemma: comparison lemma}), there exists $\delta_0>0$ such that whenever $-\delta_0 < k - k^* < 0$, we have $W_k < W_{k^*}$, $W_k(x)$ is close to $W_{k^*}(x)$ for all $x$, and $W_k(1) \geq 0$.
    This contradicts the definition of $k^*$. Therefore, $W_{k^*}(1) = 0$. 

    Second, we intend to show $c_{k^*} = 1$ by contradiction. Suppose $c_{k^*} < 1$. Lemma \ref{lemma: touch x axis tangentially} yields $\lim_{x\to c_{k^*}^-} W_{k^*}'(x) < 0$. 
    Since $W_{k^*}(1) = 0$, one can find $\eta\in(c_{k^*}, 1)$ such that $W_{k^*}$ takes a negative local minimum at $\eta$. 
    By \eqref{eq: W_k second order} at $x=\eta$, we observe that 
    \[
        \frac{\sigma^2}{2}W_{k^*}''(\eta) = - G'(\eta, a_1)
        + (\alpha + \delta)W_{k^*}(\eta). 
    \]
    Since $G'(\eta, a_1) \geq 0$, it is straightforward that $W_{k^*}''(\eta) < 0$, which contradicts our assumption. Hence, $W_{k^*}$ reaches $x$ axis at the endpoint for the first time.

    Third, we intend to exhibit that $W_{k^*}(x)$ is concave around $x=1$. Since $W_{k^*}(0) = C_L$, $W_{k^*}(1) = 0$, and $W_{k^*}'(0) = k^*$, \eqref{eq: W_k second order} suggests that for $x\in U := \{x: 1 - \delta_0 < x< 1\}$, where $\delta_0 > 0$ is arbitrary, we have 
    \[
        \frac{\sigma^2}{2}W_{k^*}'' \leq \gamma(1 - x)^{\frac{1}{\gamma - 1}}F(W_{k^*}) 
        - (\gamma - 1)(1 - x)^{\frac{\gamma}{\gamma - 1}} F'(W_{k^*}) W_{k^*}'
        -G'(x, a_1) + (\alpha + \delta) W_{k^*},  
    \]
    since $W_{k^*}'(1) < 0$ by Lemma \ref{lemma: touch x axis tangentially}. 
    By letting $x\to 1$, we obtain $\lim_{x\to 1}W_{k^*}''(x) \leq - \lim_{x\to 1} G'(x, a_1) = -\infty < 0$ if $a_1 > 1$, and $W_{k^*}''(1) \leq - a_0p_0^{-a_1}(c_v + a_2 p_0) < 0$ if $0< a_1 \leq 1$. 
    Thus, $W_{k^*}$ is strictly concave for $x\in U$. 

    With these observations in hand, we can deduce the solution profile of the corresponding $Y_{k^*}$. 
    Since $Y_{k^*}' = W_{k^*}$, it is straightforward deriving that $Y_{k^*}(x) = \int_0^x W_{k^*}(s)ds + K_{k^*}$, where $K_{k^*} = \frac{1}{\alpha} (\frac{\sigma^2}{2} k^* + (\gamma - 1) C_L^{\frac{\gamma}{\gamma - 1}})$. 
    Moreover, $Y_{k^*}'(1) = W_{k^*}(1) = 0$. 
    By the definition of $k^*$ in \eqref{eq: k^*} and in conjunction with the continuity and Lemma \ref{lemma: touch x axis tangentially}, 
    % and no negative oscillation, 
    we have $W_{k^*}$ is bounded on a closed interval $[0, 1]$ and $W_{k^*} \geq 0$, which further yields that $Y_{k^*}'$ is non-negative and bounded. 
    As a consequence, $Y_{k^*}(x)$ is monotonically increasing over $x \in [0, 1]$. 
    This completes the proof. 
\end{proof}

% \begin{remark}
%     In the case of $0 < a_1\leq 1$ and $p_0 < c_v$, we could still observe a local maximum that is greater than $\frac{a_2(c_v - p_0)}{(a_2 + 1)c_v}$. 
%     Suppose there is $\xi\in[\frac{a_2(c_v - p_0)}{(a_2 + 1)c_v}, 1]$ such that $W_k(\xi) > 0$ and $W_k'(\xi) =0$. 
%     Since for $\xi > \frac{a_2(c_v - p_0)}{(a_2 + 1)c_v}$, we have $c_v + a_2 \xi^{-1} [p_0 - c_v(1-\xi)] > 0$, which implies $G'(x, a_1) > 0$. 
%     By \eqref{eq: W_k second order}, we have 
%     \[
%     \frac{\sigma^2}{2}W_k''(\xi) = \gamma (1 - \xi)^{\frac{1}{\gamma - 1}}F(W_k(\xi)) 
%     -G'(x, a_1)
%     +(\alpha + \delta)W_k(\xi)
%     \]
    
%     \textcolor{red}{need more work to show (1) there could be negative local minimum $<\frac{a_2(c_v - p_0)}{(a_2 + 1)c_v}$, (2) the local maximum $>\frac{a_2(c_v - p_0)}{(a_2 + 1)c_v}$ should be positive}
% \end{remark}

Now, we exhibit the proof of Proposition \ref{prop: solution to HJB}. 

\begin{proof}[Proof of Proposition \ref{prop: solution to HJB}]
    For all $x\in[0, 1]$, let $Q(x) = Y_{k^*}(x)$. With the help of Proposition \ref{prop: solution profile for Y_k^*}, it is straightforward that $Q(\cdot)$ satisfies all the assertions. Moreover, since the monotonicity and continuity of $Q(\cdot)$, and $Q(0) = Y_{k^*}(0) = K_{k^*}$, where $K_{k^*} = \frac{1}{\alpha} (\frac{\sigma^2}{2} k^* + (\gamma - 1) C_L^{\frac{\gamma}{\gamma - 1}})$ is a finite constant, we conclude that $Q(\cdot)$ is bounded. 
\end{proof}

\newpage
\bibliography{refs}
\end{document}